\newtheorem{rem}{Remark}[section]
\newtheorem{prop}{Proposition}[section]
\newtheorem{thm}{Theorem}[section]
\newcommand{\bfeps}{\boldsymbol \epsilon}
\newcommand{\Div}{\text{{\bf div}$~$}}
\renewcommand{\div}{\text{\rm div$~$}}
\newcommand{\bftheta}{\boldsymbol \theta}
\newcommand{\trho}{\widetilde{\rho}}
\newcommand{\IR}{\mathbb{R}}
\newcommand{\bfn}{\boldsymbol n}
\newcommand{\bfu}{\boldsymbol u}
\newcommand{\bff}{\boldsymbol f}
\newcommand{\bfsig}{\boldsymbol{\sigma}}
\newcommand{\bfzero}{\boldsymbol 0}
\newcommand{\bfI}{\boldsymbol I}
\newcommand{\bfz}{\boldsymbol z}
\newcommand{\bfp}{\boldsymbol p}
\newcommand{\bfv}{\boldsymbol v}
\newcommand{\mcT}{\mathcal{T}}
\newcommand{\mcL}{\mathcal{L}}
\newcommand{\bfw}{\boldsymbol w}
\newcommand{\bfr}{\boldsymbol r}
\newcommand{\bfe}{\boldsymbol e}
\newcommand{\Ps}{{P}_\Gamma}
\newcommand{\bfx}{\boldsymbol x}
\title{Hybridized Augmented Lagrangian Methods for Contact Problems}
\date{\today}
\author{
Erik Burman,
Peter Hansbo,
Mats G. Larson 
}
\date{}
\begin{document}

\maketitle

\begin{abstract} 
This paper addresses the problem of friction-free contact between two elastic bodies. We develop an augmented Lagrangian method that provides computational convenience by reformulating the contact problem as a nonlinear variational equality. To achieve this, we propose a Nitsche-based method incorporating a hybrid displacement variable defined on an interstitial layer. This approach enables complete decoupling of the contact domains, with interaction occurring exclusively through the interstitial layer. The layer is independently approximated, eliminating the need to handle intersections between unrelated meshes. Additionally, the method supports introducing an independent model on the interface, which we leverage to represent a membrane covering one of the bodies.  We present the formulation of the method, establish stability and error estimates, and demonstrate its practical utility through illustrative numerical examples.
\end{abstract}

%\keywords{elastic contact, augmented Lagrangian, hybridization}

\section{Introduction}
\label{sec:1}

Traditionally, two-body contact algorithms in finite element analysis rely on a node-to-segment approach, where the nodes of one mesh are constrained from crossing the discrete boundary of the other. In a one-pass algorithm, only the nodes on one of the surfaces are considered, which can result in local penetration if the mesh densities differ significantly. A two-pass algorithm, on the other hand, considers nodes on both surfaces, but this can lead to ill-posed problems, cf. El-Abbasi and Bathe \cite{EABa01}. This arises when nodes on the two surfaces are very close, as they may impose nearly identical contact conditions. To address this issue, additional checks are necessary, as discussed by Puso and Laursen \cite{PuLa04}.

The contact constraints can be enforced using discrete Lagrange multipliers (contact pressures) associated with the nodes or a nodal penalty method that penalizes the no-penetration constraint. Another classical approach is the distributed Lagrange multiplier method \cite{EABa01,BeHiLa99}, where the multiplier's discretization is typically related to the surface mesh of one of the bodies to ensure stability. Consequently, these approaches require careful handling of intersections between unrelated meshes. This challenge also applies to distributed penalty methods and Nitsche's method.

To overcome these limitations and enable a more flexible approximation of the interface variables, we extend the hybrid Nitsche's method \cite{BuElHaLaLa19},  which incorporates an independent displacement field at the interface,  to the case of friction-free elastic contact. The hybrid field can serve as an auxiliary variable without direct physical interpretation, facilitating the transfer of information between the contacting bodies. For instance, the hybrid field may be defined on a structured mesh for computational convenience. Alternatively, the hybrid variable can represent physical phenomena, such as a membrane covering one of the bodies or a shell located between the two bodies.

We formulate this approach within an augmented Lagrangian framework, leveraging Rockafellar's reformulation \cite{Ro73, Ro73b, Rock74} of the well-known Kuhn-Tucker contact conditions. This reformulation was first proposed and analyzed by Chouly and Hild \cite{ChHi13} for Nitsche's method in the context of friction-free contact (see also the review by Chouly \textit{et al.} \cite{ChFaHiMlPoRe17}). This framework enables the definition of the Nitsche stabilization mechanism for variational inequality problems, transforming the inequality constraints associated with contact into nonlinear \textit{equalities} (see also \cite{BuHaLa23}). These equalities facilitate the application of iterative solution schemes in the spirit of Alart and Curnier \cite{AC91}.

Our method shares similarities with that of Chouly and Hild \cite{ChHi13} but with a significant distinction: it allows for an independent approximation of the displacement field in the contact zone, decoupled from the approximation used in the elastic bodies. We establish theoretical results, including stability and approximation properties, to support the method's robustness. Finally, we provide several numerical examples to demonstrate our method's practical application and performance.

The structure of the paper is as follows: Section \ref{sec:contact} introduces the contact model, derives the hybridized finite element method, and formulates the associated optimality equations.  Section \ref{se:error-estimates} presents a stability estimate, a best approximation result, and a discussion of the method's convergence order.
Section \ref{sec:numerics} provides several numerical examples to illustrate the practical application and performance of the method. Section \ref{sec:conclusions} concludes the paper with a summary of key findings and insights.

\section{A Contact Model Problem}\label{sec:contact}

\subsection{Hybridized Problem Formulation}

We shall study several different contact problems between two elastic bodies occupying the domains $\Omega_i \subset \IR^d$, $i=1,2$, with $d = 2$ or $3$. Let $\Omega_0$ denote a hybrid object located between $\Omega_1$ and $\Omega_2$, see Fig. \ref{fig:domains}. For $d = 3$, the hybrid object is a surface, and for $d = 2$, it is a curve. The bodies $\Omega_1$ and $\Omega_2$ do not come into direct contact; all interactions are mediated through $\Omega_0$. Thus, all information exchanged between $\Omega_1$ and $\Omega_2$ passes through the hybrid space.  We will consider two scenarios:

\begin{itemize} \item {\bf Case 1.} The hybrid space is an auxiliary tool to facilitate numerical computations. For example, it may consist of a structured mesh that efficiently transfers data between two unstructured meshes. In this scenario, the hybrid space must be stabilized, potentially by ensuring it conforms to the boundary of one of the bodies.

\item {\bf Case 2.} The hybrid space represents physical interactions between $\Omega_1$ and $\Omega_2$, such as a membrane or a plate residing between them, as described in \cite{HaLa22}. \end{itemize}

In both cases, the hybrid object $\Omega_0$ is treated as the master object, while the elastic bodies $\Omega_i$, $i=1,2$, implement contact or equality constraints. The hybrid object is only influenced by the normal stresses exerted by the two elastic bodies.

We will demonstrate that both cases can be conveniently addressed and analyzed within a unified abstract framework. To achieve this, we will not initially specify the exact properties of the forms associated with the hybrid object. Instead, we will develop the analysis based on general abstract assumptions and define the specific hybrid forms for the applications presented below. This approach emphasizes the flexibility and generality of the framework.
\begin{figure}
  \centering
\includegraphics[width=3in]{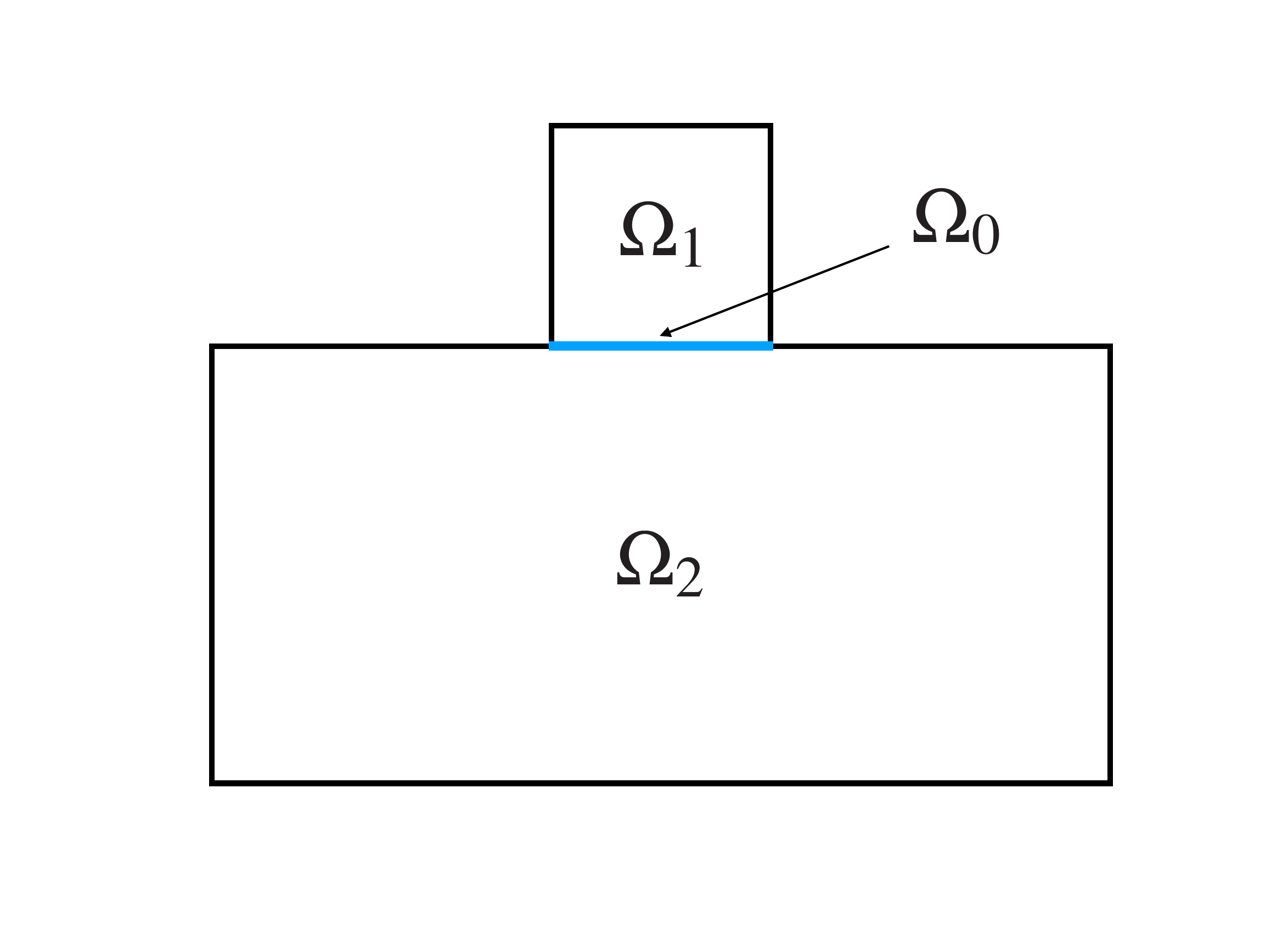}
    \caption{Example of the different domains $\Omega_i$.}
  \label{fig:domains}
\end{figure}
\

\paragraph{Governing Equations for the Elastic Bodies.} Following the problem definition of Fabre, Pousin, and Renard \cite{FaPoRe16}, the boundary $\partial \Omega_i$ of $\Omega_i$, for 
$i=1,2,$ is divided into three nonoverlapping parts: $\partial \Omega_{i,C}$ (the potential zone of contact), $\partial \Omega_{i,N}$ (the Neumann part), and $\partial \Omega_{i,D}$ (the Dirichlet part) which we, in the analysis, assume have non-zero measure to guarantee that Korn's inequality holds. We let $\bfn_i$ be the exterior unit normal to $\partial \Omega_i$.

Let $\bfu_i:\Omega_i \rightarrow \IR^d$ denote the displacement field on $\Omega_i$. We 
assume that the two elastic bodies $\Omega_i$, $i=1,2,$
are subjected to volume forces $\bff_i$ and, for simplicity, zero displacements on $\partial \Omega_{i,D}$ and zero tractions on 
$\partial \Omega_{i,N}$, 
\begin{alignat}{3}
-\nabla\cdot\bfsig(\bfu_i)  &= \bff_i & \qquad & \text{in $\Omega_{i}$}  \label{eq:omegaibody}
 \\
  \bfu_i &=  \bfzero & \qquad &\text{on $\partial\Omega_{i,D}$}
  \label{eq:omegaidir} 
  \\
  \bfsig_n(\bfu_i) &=  \bfzero & \qquad &\text{on $\partial\Omega_{i,N}$} 
  \label{eq:omegaineu} 
\end{alignat}
We further assume that Hooke's constitutive law holds
\begin{align}
  \bfsig(\bfu_i) =  \lambda_i ~\text{tr}\,\bfeps(\bfu_i)\,\bfI 
   + 2 \mu_i \bfeps(\bfu_i), \qquad \bfeps(\bfu_i) = \frac12(\bfu_i \otimes \nabla+ \nabla \otimes \bfu_i )
\end{align}
with Lam\'e parameters $\mu_i$ and $\lambda_i$. We must add the equality or contact constraints on $\partial \Omega_{i,C}$ to complete the equations.

\paragraph{The Equality and Inequality Constraints.}
To define our contact constraints, we start by defining the distance between $\Omega_i$ and $\Omega_0$,
\begin{align}\label{eq:rhodef}
\rho_{i,0}(\bfz) =  (\bfn_{i,0}(\bfz),\bfz - \bfp_{0}(\bfz))_{\IR^d}
\end{align}
where $\bfz \in \partial \Omega_{i,C}$,  $\bfp_{0}(\bfz)$ is the closest point mapping associated with $\Omega_0$,  and $\bfn_{i,0}(\bfz) = \bfn_{i,0}(\bfp_0(\bfz))$ is the pullback of the normal to $\Omega_0$ pointing into $\Omega_i$. Taking the deformation fields into account by replacing $\bfz - \bfp_0(\bfz)$ by $(\bfz+\bfv_i(\bfz)) - (\bfp_0(\bfz) - \bfv_0(\bfp_0(\bfz))$, gives 
\begin{align}
\trho_{i,0}(\bfz,\bfv_0,\bfv_i) &=  (\bfn_{i,0}(\bfz),(\bfz+\bfv_i(\bfz))_{\IR^d} - (\bfp_0(\bfz) + \bfv_0(\bfp_0(\bfz))_{\IR^d}
\\
&=  (\bfn_{i,0}(\bfz),(\bfz- \bfp_0(\bfz))_{\IR^d}
+(\bfn_{i,0}(\bfz),\bfv_i(\bfz)-\bfv_0(\bfp_0(\bfz))_{\IR^d}
\\
&=  \rho_{i,0}(\bfz)
+(\bfn_{i,0}(\bfz),\bfv_i(\bfz)-\bfv_0(\bfp_0(\bfz))_{\IR^d}
\\
&= \rho_{i,0}(\bfz) - [v_n]_i
\end{align}
Thus, we may extend the definition (\ref{eq:rhodef}) to the deformed case by
\begin{align}\label{eq:dist-def}
\boxed{\trho_{i,0}(\bfz,\bfv_0,\bfv_i) 
=\rho_{i,0}(\bfz) - [v_n(\bfz)]_i\quad \text{on $\partial \Omega_{i,C}$}}
\end{align}
where 
\begin{align}
\boxed{[v_n(\bfz)]_i = (\bfn_{i,0}(\bfz),\bfv_i(\bfz))_{\IR^d} 
-(\bfv_0(\bfp_0(\bfz))_{\IR^d}}
\end{align}
is the jump in the normal displacements. 
In the following, we make  the dependence on $\bfz$ implicit and use the more compact notation $\trho_{i,0}(\bfv_0,\bfv_i)$ etc.

\begin{rem}
Note that the exterior unit 
normal to the elastic body satisfies $\bfn_i = - \bfn_0$ in the contact 
zone and $\bfn_i \approx - \bfn_0$ close to the contact if the boundary 
is smooth and thus
\begin{equation}
[v_n]\approx v_{i,n} + v_{0,n} = (\bfn_{i},\bfv_i)_{\IR^d}
+
(\bfn_{0}, \bfv_0)_{\IR^d}
\end{equation}
which is the standard definition of the jump in the normal displacement.
\end{rem}

We then have the following constraints,
\begin{itemize}
\item {\bf Hybridized equality constraint:} 
\begin{alignat}{3}
  \trho_{i,0}(\bfu_i,\bfu_0) &=  0  &\qquad & \text{on $\partial\Omega_{i,C}$}  \label{eq:equality}
  \end{alignat}
\item {\bf Hybridized contact constraints:} 
\begin{alignat}{3}
  -\trho_{i,0}(\bfu_i,\bfu_0) &\leq  0  &\qquad & \text{on $\partial\Omega_{i,C}$}  \label{eq:kuhn1}
  \\
  \sigma_n(\bfu_i) &\leq  0 & \qquad &\text{on $\partial\Omega_{i,C}$} \label{eq:kuhn2} 
  \\
\sigma_n(\bfu_i) \trho_i(\bfu_i,\bfu_0) &= 0 &  \qquad &\text{on $\partial\Omega_{i,C}$} \label{eq:kuhn3}
\end{alignat}
where $\sigma_n(\bfv)$ is the (scalar) normal surface stress
\begin{align}\label{eq:signn}
\sigma_n(\bfv_i) = \bfn_i \cdot \bfsig(\bfv_i)\cdot \bfn_i
\end{align}
\end{itemize}

\subsection{A Hybrid Nitsche Finite Element Method}

\paragraph{Function and Finite Element Spaces.} We first define 
the natural function spaces for our continuous problem. Let 
$V_i = H^1(\Omega_i)$ for $i=1,2,$ and $V_0 = H^{s_0}(\Omega_0)$ 
where $s_0$ depends on the physical properties of the 
hybrid space. For convenience, we define the product space
\begin{align}
W = V_0 \oplus V_1 \oplus V_2
\end{align}
of $\IR^d$ valued displacement fields. 

Next, we define the corresponding finite element spaces. To that end let $\mcT_{h,i}$ be a quasi-uniform partition of $\Omega_{i}$, into shape regular elements $T$, with mesh-parameter $h_i \in (0,h_0]$. Let $V_{h,i} \subset V_i$ be a conforming finite element space on $\mcT_{h,i}$ consisting of piecewise polynomials of order $p_i$. Let $\pi_{h,i}:H^1(\Omega_i) \rightarrow V_{h,i}$ be an interpolation operator such that 
\begin{equation}\label{eq:interpol}
\boxed{ \| v - \pi_{h,i} v \|_{H^m(T)} \lesssim h_i^{k-m} \| v \|_{H^k(N(T))}, \qquad
0\leq m \leq k \leq p_i+1 }
\end{equation}
We finally define the product of the finite element spaces
\begin{equation}
W_h = V_{h,0} \oplus V_{h,1} \oplus V_{h,2}
\end{equation}
where $V_{h,i}$ are the $d$-dimensional versions of the corresponding 
scalar spaces.

\paragraph{Augmented Lagrangian Formulation.} We consider the nonlinear augmented Lagrangian formulation of Rockafellar \cite{Ro73,Ro73b,Rock74}, introduced in contact analysis by Alart and Curnier \cite{AC91}. The basic idea is to write the Kuhn--Tucker conditions (\ref{eq:kuhn1})--(\ref{eq:kuhn3}) in the equivalent form
\begin{equation}\label{eq:rocka}
\boxed{\sigma_n(\bfu_i)
= [\sigma_n(\bfu_i) + \gamma_i \trho_{i,0}(\bfu_0,\bfu_i) ]_-
= [\sigma_n(\bfu_i) - \gamma( [u_n]_i - \rho_{i,0}) ]_-}
\end{equation}
where $[x]_- = \min(x,0)$ and $\gamma > 0$ is a parameter, see Appendix \ref{sec:kkt} and \cite{ChHi13}. Dimensional analysis indicates that 
\begin{equation}\label{eq:gamma}
\boxed{\gamma_i = \gamma_0 h_i^{-1}}
\end{equation}
for a parameter $\gamma_0$ not dependent on $h_i$, 
and we will see in the forthcoming analysis that this is indeed the proper choice. 

To handle equality and inequality constraints in the same formulation, we define
\begin{align}\label{eq:S}
\boxed{S_i(\bfv) = 
\begin{cases}
\sigma_n(\bfv_i)-\gamma_0 h_i^{-1} ([v_n]_i - \rho_{i,0}) &\text{Equality}
\\
[\sigma_n(\bfv_i)-\gamma_0 h_i^{-1} ([v_n]_i - \rho_{i,0})]_- &\text{Inequality}
\end{cases}
}
\end{align}
Note that $S_i(\bfv)$ is a function of $\bfv_0$ and $\bfv_i$, so that 
$S_i(\bfv) = S_i(\bfv_i,\bfv_0)$.

\begin{rem} The quantity 
\begin{equation}
\Sigma_{n,i}(\bfv_0,\bfv_i) = \sigma_n(\bfv_i)-\gamma_0 h_i^{-1} ([v_n]_i - \rho_{i,0})
\end{equation}
is the so-called Nitsche normal stress, which is the natural approximation of the 
normal stress provided by the method and thus
\begin{align}\label{eq:S-nitsche}
\boxed{S_i(\bfv) = 
\begin{cases}
\Sigma_{n,i}(\bfv_0,\bfv_i) 
 &\text{Equality}
\\
[\Sigma_{n,i}(\bfv_0,\bfv_i) ]_- &\text{Inequality}
\end{cases}
}
\end{align}
We note that the inequality constraint only allows negative normal stress.
\end{rem}

We can now formulate a discrete minimum problem 
\begin{align}
\boxed{\bfu_h = \text{argmin}_{\bfv_h \in W_h} \mcL_A(\bfv_h)}
\end{align}
where the augmented Lagrangian takes the form
\begin{align}
\mcL_A(\bfv) &= \sum_{i=0}^2 \frac12 a_i(\bfv,\bfv) - l_i(\bfv) 
\\
&\qquad  + \sum_{i=1}^2 \frac12 \gamma_0^{-1} \|S_i(\bfv)\|^2_{H^{-1/2}_h(\partial \Omega_{i,C})} 
- \frac12 \gamma_0^{-1} \| \sigma_n (\bfv_i)) \|^2_{H^{-1/2}_h(\partial \Omega_{i,C})} 
\end{align}
Here, for $i=1,2,$ the forms are defined by
\begin{align}
a_i(\bfv,\bfw) &= (\bfsig_i(\bfv_i),\bfeps(\bfw_i))_{\Omega_i}
 \\
 l_i(\bfv) &= (\bff_i,\bfv_i)_{\Omega_i}
\end{align}
and the forms $a_0$ and $l_0$ are related to the hybrid space and may model some physics 
(Case 2) or could be zero (Case 1).  In the latter case, $\Omega_0$ is connected weakly 
to $\Omega_{i,C}$ for one of the domains $\Omega_i$, say $i=1$, through an equality 
constraint (\ref{eq:equality}).  Here and below also use the scalar products
\begin{align}
(v,w)_{H^{s}_h(\partial \Omega_{i,C})} = h_i^{-{2s}}( v,w)_{\partial \Omega_{i,C}}, \qquad s \in \IR
\end{align}
with associated norms
\begin{align}
\| v \|^2_{H^{s}_h(\partial \Omega_{i,C})} = (v,v)_{H^{s}_h(\partial \Omega_{i,C})}
= h_i^{-2s} \| v \|^2_{\partial \Omega_{i,C}},   \qquad  s \in \IR
\end{align}
These scalar products mimic the corresponding continuous $H^{s}(\partial \Omega_{i,C})$ product on the 
discrete spaces.

\subsection{Optimality Equations}

The optimality equations take the form: find $\bfu_h \in W_h$ such that 
\begin{align}\label{eq:fem}
\boxed{A(\bfu_h, \bfv) = l(\bfv)\qquad \forall \bfv \in W_h}
\end{align}
Here, the forms are defined by 
\begin{align}\label{eq:form-A}
A(\bfv,\bfw) &=a(\bfv,\bfw) + b(\bfv,\bfw) - c(\bfv,\bfw)
\\
a(\bfv,\bfw)&= 
\sum_{i \in I_a} a_i(\bfv_i,\bfw_i) 
\\
b(\bfv,\bfw) &= \sum_{i=1}^2 b_i(\bfv,\bfw) 
\\
c(\bfv,\bfw) &=\sum_{i=1}^2 c_i(\bfv,\bfw) 
\\
l(\bfv) &= \sum_{i=0}^2 l_i(\bfv_i)
\end{align}
where 
\begin{equation}
I_a =
\begin{cases}
\{1, 2\} & \text{Case 1}
\\
\{0, 1, 2\} & \text{Case 2}
\end{cases}
\end{equation}
is the index set indicating the number of active forms in Case 1 (auxiliary hybrid space) 
and Case 2 (physical modeling hybrid space), 
\begin{equation}\label{eq:form-b}
b_i(\bfv,\bfw)
=
\gamma_0^{-1} h_i (S_i(\bfv),  DS_i(\bfw))_{\partial \Omega_{i,C}} 
\end{equation}
with $S_i$ defined in (\ref{eq:S}) and
\begin{equation}
DS_i(\bfw)= \sigma_n(\bfw_i) - \gamma_0 h_i^{-1} [w_n]_i 
\end{equation}
and 
\begin{equation}\label{eq:form-c}
c_i(\bfv,\bfw) = \gamma_0^{-1} h_i (\sigma_n(\bfv_i),\sigma_n(\bfw_i))_{\partial \Omega_{i,C}}
\end{equation}

\section{Error Estimates}
\label{se:error-estimates}
\subsection{Properties of the Forms}
\begin{itemize}
%\item Let $V_h$ be a finite element space corresponding to $V$. 
%Assuming that the space $W$ is equipped with the $H^r(\omega)$ norm, where $\omega$ is a subset of $\Omega$. Let $W_h$ and $W_h^*$ be the simple scaled $L^2$ spaces 
%corresponding to $W$ and $W^*$ equipped scalar product
%\begin{align}
%(v,w)_{W_h} = h^{-r}(v,w)_\Omega, \qquad (v,w)_{W_h} = h^{r}(v,w)_\Omega
%\end{align}
%Then $h^{2r} I : W_h^* \rightarrow W_h$ preserves the norm
%\begin{align}
%\| v \|_{W_h^*} = \| h^{2r} v \|_{W_h}
%\end{align}

\item The forms $a_i$,  $i\in I_a$,  are coercive and continuous
\begin{align}\label{eq:aicoer}
\| \bfv \|_{V_i}^2 &\lesssim a_i(\bfv,\bfv) \qquad \bfv \in V_i
\\ \label{eq:aicont}
a_i(\bfv_i,\bfw_i) &\lesssim \| \bfv \|_{V_i} \|\bfw \|_{V_i}  \qquad \bfv,\bfw \in V_i
\end{align}
For $i\in \{1,2\}$,  $V_i = H^1(\Omega_i)$ and $V_0$ will depend on the choice 
of hybrid model. For a second order model $V_0 = H^1(\Omega_0)$ and for a fourth order 
model $V_0 = H^2(\Omega_0)$. We define the energy norm
\begin{align}\label{eq:a-energynorm}
\| \bfv \|_{a}^2 = a(\bfv,\bfv) = \sum_{i \in I_a} a_i(\bfv_i,\bfv_i)
\end{align}
%
%
%Note that in Case 1, the hybrid space is an auxiliary space; the 
%form $a_0$ is not present. For clarity, we use the compact notion
%\begin{gather}\label{eq:coer-a}
%\| \bfv \|^2_{W} = \sum_{i=0}^2 \| \bfv_i \|_{V_i}^2 \lesssim \sum_{i=0}^2 a_i(\bfv_i,\bfv_i) = a(\bfv,\bfv)\qquad \bfv \in W
%\\ \label{eq:cont-a}
%a(\bfv,\bfw) = \sum_{i=0}^2 a_i(\bfv_i,\bfw_i) \lesssim \sum_{i=0}^2 \| \bfv \|_{V_i} \|\bfw \|_{V_i} 
%\lesssim \| \bfv \|_{W} \|\bfw \|_{W} \qquad \bfv,\bfw \in W
%\end{gather}
%where we used the Cauchy-Schwarz inequality in the last inequality.
\item The forms $b_i$ satisfies the monotonicity  and continuity 
\begin{gather}\label{eq:b-coer-i}
\gamma_0^{-1} \| S_i(\bfv) - S_i(\bfw) \|^2_{H^{-1/2}_h(\partial \Omega_{i,C})}
\leq b_i(\bfv,\bfv-\bfw)  -  b_i(\bfw,\bfv-\bfw)
\\ \label{eq:b-cont-i}
|b_i(\bfv,\bfr) - b_i(\bfw,\bfr)| 
\leq 
\gamma_0^{-1} \| S_i(\bfv) - S_i(\bfw) \|_{ H^{-1/2}_h( \partial \Omega_{i,C} ) }\| DS_i(\bfr) \|_{H^{-1/2}_h( \partial \Omega_{i,C})}
\end{gather}
with vectorized versions, 
\begin{gather}\label{eq:b-coer}
\sum_{i=1}^2 \gamma_0^{-1} \| S_i(\bfv) 
-S_i(\bfw) \|^2_{H^{-1/2}_h(\partial \Omega_{i,C})}
\leq b(\bfv,\bfv-\bfw)  -  b(\bfw,\bfv-\bfw)
\\ \label{eq:b-cont}
|b(\bfv,\bfr) - b(\bfw,\bfr)| \leq \sum_{i=1}^2 \gamma_0^{-1} \| S_i(\bfv) - S_i(\bfw) \|_{ H^{-1/2}_h(\partial \Omega_{i,C}) } 
\| DS_i(\bfr) \|_{H^{-1/2}_h(\partial \Omega_{i,C})}
\end{gather}
See the appendix for derivations of these inequalities.

\item The form $c$ is continuous
%The form $c_i$, $i\in \{1,2\}$, satisfies,
%\begin{gather}
%c_i(\bfv,\bfw) \lesssim  \gamma_0^{-1}   \| \sigma_n(\bfv) \|_{H^{-1/2}_h(\partial \Omega_{i,C})} 
% \| \sigma_n(\bfw) \|_{H^{-1/2}_h(\partial \Omega_{i,C})} \qquad  \bfv, \bfw \in V_i+ V_{h,i}
%\\ 
%c_i(\bfv,\bfv) = \gamma_0^{-1}  \| \sigma_n(\bfv) \|_{H^{-1/2}_h(\partial \Omega_{i,C})}
%\lesssim \gamma_0^{-1} \| \bfv \|^2_{V_i} \lesssim \gamma_0^{-1} \| \bfv \|^2_{a_i} \qquad \bfv \in V_{h,i}
%\end{gather}
%and we directly get the continuity 
\begin{align}\label{eq:c-cont}
c(\bfv,\bfw) 
%&\lesssim \sum_{i=1}^2 \gamma_0^{-1/2}h^{1/2}  \| \sigma_n(\bfv_i)  \|_{\partial \Omega_{i,C}} \gamma_0^{-1/2} h^{1/2} \| \sigma_n(\bfw_i) \|_{\partial \Omega_{i,C}} 
%
&\lesssim \| \bfv \|_c \| \bfw \|_c
\qquad  \bfv, \bfw \in W+ W_{h}
\end{align}
and we have the inverse bound
\begin{align} 
\| \bfv \|^2_c &= \sum_{i=1}^2 \gamma_0^{-1}  \| \sigma_n(\bfv_i) \|_{H_h^{-1/2}(\partial \Omega_{i,C})}^2
\\
&\qquad \qquad \label{eq:c-cont-b}
\lesssim \sum_{i=1}^2\gamma_0^{-1} \| \bfv_i \|^2_{V_i} 
\lesssim \gamma_0^{-1} \| \bfv \|^2_a 
\qquad \bfv \in W_h
\end{align}
where the last bound follows from (\ref{eq:aicoer}) for $i \in \{1,2\}$ and the fact that $\{1,2\} \subset I_a$,  in both Case 1 and Case 2.

\end{itemize}

\subsection{Estimates}

\begin{prop}The discrete problem (\ref{eq:fem}) admits a unique solution 
$\bfu_h \in W_h$ such that
\begin{align}
& \|\bfu_{h}\|^2_a 
+
\sum_{i=1}^2 \gamma_0^{-1} \|S_i(\bfu_h)  - S_i(\bfzero) \|_{H^{-1/2}_h (\partial \Omega_{i,C})}^2 
 \\
&\qquad \qquad \lesssim\sum_{i \in I_a} \|\bff_i \|^2_{V_i^*} 
+ \sum_{i=1}^2 \gamma_0^{-1} h \| S_i(\bfzero) \|^2_{\partial \Omega_{i,C}}
\end{align}
\end{prop}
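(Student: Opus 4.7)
The plan is to derive the bound by testing (\ref{eq:fem}) with $\bfv = \bfu_h$ and to recover existence and uniqueness from the convex minimization structure behind the optimality equations. Testing yields $a(\bfu_h,\bfu_h) + b(\bfu_h,\bfu_h) - c(\bfu_h,\bfu_h) = l(\bfu_h)$. The coercivity (\ref{eq:aicoer}) replaces the first term by $\|\bfu_h\|_a^2$, and the monotonicity (\ref{eq:b-coer}) applied at the base point $\bfw = \bfzero$ gives
\[
b(\bfu_h,\bfu_h) \geq b(\bfzero,\bfu_h) + \sum_{i=1}^2 \gamma_0^{-1}\|S_i(\bfu_h) - S_i(\bfzero)\|^2_{H^{-1/2}_h(\partial\Omega_{i,C})}.
\]
The inverse bound (\ref{eq:c-cont-b}) yields $c(\bfu_h,\bfu_h)\lesssim \gamma_0^{-1}\|\bfu_h\|_a^2$, which is absorbable for $\gamma_0$ large, and duality gives $|l(\bfu_h)| \lesssim (\sum_{i\in I_a}\|\bff_i\|_{V_i^*}^2)^{1/2}\|\bfu_h\|_a$, which Young's inequality splits into the data term and another absorbable piece.

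The main obstacle is the residual linear term $b(\bfzero,\bfu_h) = \sum_i \gamma_0^{-1}h_i(S_i(\bfzero), DS_i(\bfu_h))_{\partial\Omega_{i,C}}$, since a plain Cauchy--Schwarz introduces $\|DS_i(\bfu_h)\|_{H^{-1/2}_h}$, whose jump component $\gamma_0 h_i^{-1}[u_{h,n}]_i$ is not immediately dominated by $\|\bfu_h\|_a$. I would split $DS_i(\bfu_h) = \sigma_n(\bfu_{h,i}) - \gamma_0 h_i^{-1}[u_{h,n}]_i$: the stress piece pairs with $S_i(\bfzero)$ and is handled by (\ref{eq:c-cont-b}), while the jump piece is estimated using the identity $DS_i(\bfu_h) = S_i(\bfu_h) - S_i(\bfzero)$ (valid in the equality case since $S_i$ is then affine), or, in the inequality case, a pointwise case analysis on the signs of $X_i = DS_i(\bfu_h) + \gamma_0 h_i^{-1}\rho_{i,0}$ and $X_i^0 = \gamma_0 h_i^{-1}\rho_{i,0}$: $S_i(\bfzero)$ is supported on $\{X_i^0 < 0\}$, and there the integrand $S_i(\bfzero)\cdot DS_i(\bfu_h)$ is bounded by $|S_i(\bfzero)|$ times quantities already controlled by $\|\sigma_n(\bfu_{h,i})\|_{H^{-1/2}_h}$ and $\|S_i(\bfu_h) - S_i(\bfzero)\|_{H^{-1/2}_h}$. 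Cauchy--Schwarz followed by Young's inequality then produces the data term $\gamma_0^{-1}\|S_i(\bfzero)\|^2_{H^{-1/2}_h} = \gamma_0^{-1}h\|S_i(\bfzero)\|^2_{\partial\Omega_{i,C}}$, together with absorbable multiples of $\|\bfu_h\|_a^2$ and of $\gamma_0^{-1}\sum_i\|S_i(\bfu_h) - S_i(\bfzero)\|^2_{H^{-1/2}_h}$.

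Existence and uniqueness are inherited from the minimization characterization $\bfu_h = \text{argmin}_{\bfv\in W_h}\mcL_A(\bfv)$. The augmented Lagrangian is convex on $W_h$: the elastic part is strictly convex on the subspaces on which $a$ is coercive, the terms $\tfrac{1}{2}\gamma_0^{-1}\|S_i(\bfv)\|^2_{H^{-1/2}_h}$ are convex (as a square of an affine map in the equality case, and of $[\cdot]_-$ composed with an affine map in the inequality case), and the concave correction $-\tfrac{1}{2}\gamma_0^{-1}\|\sigma_n(\bfv_i)\|^2_{H^{-1/2}_h}$ is dominated by the coercivity of $a$ via (\ref{eq:c-cont-b}) once $\gamma_0$ is large. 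Coercivity of $\mcL_A$ follows from the same energy argument as the a priori bound, and on the finite-dimensional space $W_h$ this yields a unique minimizer whose Euler--Lagrange equation is exactly (\ref{eq:fem}).
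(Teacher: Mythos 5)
Your skeleton is the same as the paper's: test (\ref{eq:fem}) with $\bfv=\bfu_h$, use coercivity of $a$, the monotonicity (\ref{eq:b-coer}) anchored at $\bfzero$, absorb the $c$-term through the inverse bound (\ref{eq:c-cont-b}) for $\gamma_0$ large, bound $l$ by duality, and then deal with the leftover term $b(\bfzero,\bfu_h)$. Your existence/uniqueness argument (convexity of $\mcL_A$ on the finite-dimensional space $W_h$, with the concave $c$-part dominated via (\ref{eq:c-cont-b})) is a genuinely different and more self-contained route than the paper's, which invokes Brouwer's fixed point theorem and deduces uniqueness from the stability estimate; that part of your proposal is fine, with the same caveat as the paper that in Case 1 the hybrid component is controlled only through the coupling terms.

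The gap is in your treatment of $b(\bfzero,\bfu_h)$ in the inequality case. On the set where $\Sigma_{n,i}(\bfzero)=\gamma_0 h_i^{-1}\rho_{i,0}<0$ (so $S_i(\bfzero)\neq 0$) but $\Sigma_{n,i}(\bfu_h)>0$, one has $S_i(\bfu_h)=0$, hence $|S_i(\bfu_h)-S_i(\bfzero)|=|S_i(\bfzero)|$, while $DS_i(\bfu_h)=\Sigma_{n,i}(\bfu_h)-\Sigma_{n,i}(\bfzero)$ contains $[\Sigma_{n,i}(\bfu_h)]_+$, i.e.\ exactly the uncontrolled contribution $-\gamma_0 h_i^{-1}([u_{h,n}]_i-\rho_{i,0})$ that you flagged at the outset: a large gap opening makes $\Sigma_{n,i}(\bfu_h)$ arbitrarily large while $|\sigma_n(\bfu_{h,i})|$ and $|S_i(\bfu_h)-S_i(\bfzero)|$ stay bounded, so the claimed pointwise bound of $S_i(\bfzero)\,DS_i(\bfu_h)$ by $|S_i(\bfzero)|$ times controlled quantities fails precisely there, and the sign is unfavorable ($-S_i(\bfzero)DS_i(\bfu_h)>0$ on that set). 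To be fair, the paper's own proof glosses the same point: the identity $S_i(\bfv)=S_i(\bfzero)+DS_i(\bfv)$ it invokes holds only in the equality case. The clean repair is to avoid isolating $b(\bfzero,\bfu_h)$ altogether and bound $b(\bfu_h,\bfu_h)$ from below directly: pointwise, $S_i(\bfv)\,DS_i(\bfv)=S_i(\bfv)^2-S_i(\bfv)\,\Sigma_{n,i}(\bfzero)\ \ge\ S_i(\bfv)^2-[\Sigma_{n,i}(\bfv)]_-[\Sigma_{n,i}(\bfzero)]_-\ \ge\ \tfrac12 S_i(\bfv)^2-\tfrac12 S_i(\bfzero)^2$, which yields $b(\bfu_h,\bfu_h)\ \ge\ \sum_{i=1}^2\tfrac12\gamma_0^{-1}\bigl(\|S_i(\bfu_h)\|^2_{H^{-1/2}_h(\partial\Omega_{i,C})}-\|S_i(\bfzero)\|^2_{H^{-1/2}_h(\partial\Omega_{i,C})}\bigr)$; the stated estimate then follows from the triangle inequality $\|S_i(\bfu_h)-S_i(\bfzero)\|\le\|S_i(\bfu_h)\|+\|S_i(\bfzero)\|$. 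In the equality case your affine-identity argument is correct as written, and the whole issue disappears when $\rho_{i,0}\ge 0$, since then $S_i(\bfzero)=0$.
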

\begin{proof}  We have 
\begin{align}
l(\bfv) = A(\bfv,\bfv) &=  \|\bfv\|_a^2 + b(\bfv,\bfv) - \|\bfv\|_c^2  
\end{align}
where 
\begin{align}
b(\bfv,\bfv) &= b(\bfv,\bfv-\bfzero) - b(\bfzero,\bfv -\bfzero) + b(\bfzero,\bfv)
\end{align}
and thus
\begin{align}
\|\bfv\|_a^2  - \|\bfv\|_c^2  + b(\bfv,\bfv-\bfzero) - b(\bfzero,\bfv -\bfzero) 
= l(\bfv) - b(\bfzero,\bfv)
\leq |l(\bfv)| + |b(\bfzero,\bfv)|
\end{align}
Here we have using (\ref{eq:c-cont-b}), for $\gamma_0$ large enough, 
\begin{align}
 \|\bfv\|_a^2 - \|\bfv\|_c^2  
 \geq  \|\bfv\|_a^2 - C_1  \gamma_0^{-1} \|\bfv\|_a^2
 \geq (1 - C_1 \gamma_0^{-1} ) \|\bfv\|_a^2 
 \geq C_2  \|\bfv\|_a^2 
\end{align}
with $C_2>0$ and by (\ref{eq:b-coer}), 
\begin{align}
b(\bfv,\bfv-\bfzero) - b(\bfzero,\bfv -\bfzero) 
\geq \sum_{i=1}^2 \gamma_0^{-1} \|S_i(\bfv_{i})  - S_i(\bfzero) \|_{H^{-1/2}_h( \partial \Omega_{i,C}) }^2 
\end{align}
We thus have
\begin{align}\label{eq:prf-stab-b}
C_2  \|\bfv\|_a^2 +  \sum_{i=1}^2 \gamma_0^{-1} \|S_i(\bfv_{i})  - S_i(\bfzero) \|_{H^{-1/2}_h( \partial \Omega_{i,C}) }^2  
\leq  |l(\bfv)| + |b(\bfzero,\bfv)|
\end{align}
Next, estimating the right hand side,  we have 
\begin{align}
|l(\bfv)| \lesssim \|\bff \|_{W^*} \| \bfv \|_W \lesssim \|\bff \|_{W^*} \| \bfv \|_a
\end{align}
and
\begin{align}
|b(\bfzero,\bfv)|
&\leq
 \sum_{i=1}^2 \gamma_0^{-1} \| S_i (\bfzero )\|_{H^{-1/2}_h(\partial \Omega_{i,C})} \| DS_i (\bfv) \|^2_{H^{-1/2}_h(\partial \Omega_{i,C})}
 \\
 &=
 \sum_{i=1}^2 \gamma_0^{-1} \| S_i (\bfzero )\|_{H^{-1/2}_h(\partial \Omega_{i,C})} \| S_i (\bfv) - S_i(\bfzero) \|^2_{H^{-1/2}_h( \partial \Omega_{i,C} )}
\\
& \label{eq:prf-stab-c}
\leq \sum_{i=1}^2 \frac12 \gamma_0^{-1} \| S_i (\bfzero )\|^2_{H^{-1/2}_h ( \partial \Omega_{i,C} )} 
 +
\frac12 \gamma_0^{-1} \| S_i (\bfv) - S_i(\bfzero) \|^2_{H^{-1/2}_h ( \partial \Omega_{i,C})}
\end{align}
where we used the identity $S_i(\bfv) = S_i(\bfzero) + DS_i(\bfv)$,  which holds for affine maps. Using kick-back for the second term in (\ref{eq:prf-stab-c}) 
these estimates prove the desired stability estimate.  Using the Brouwer fixed point theorem, we can prove that there exists a solution; see \cite{BHLS17} for details. Uniqueness follows from the stability estimate.
\end{proof}

\begin{thm}The solution $\bfu_h \in W_h$ to (\ref{eq:fem}) satisfies the following best approximation estimate
\begin{align}
& \|\bfu - \bfu_h\|_a^2 
+ \sum_{i=1}^2 \gamma_0^{-1}  \|S_i(\bfu) -  S_i(\bfu_h) \|_{H^{-1/2}_h( \partial \Omega_{i,C})}^2
\\
&\qquad
\lesssim \inf_{\bfv_h \in W_h} \Big(
( 1 + \gamma_0^{-1}) \|\bfu - \bfv_h\|_a^2 
\\
&\qquad \qquad \qquad
+ \sum_{i=1}^2 \gamma_0^{-1} \|\sigma_n(\bfu_i -\bfv_{h,i}) \|_{H^{-1/2}_h(\partial \Omega_{i,C})}^2 
+ \gamma_0 \|[\bfu-\bfv_h]_i\|_{H^{1/2}_h(\partial \Omega_{i,C})}^2
\Big)
\end{align}
\end{thm}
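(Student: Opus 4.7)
My plan is to derive the estimate from a (nonlinear) Galerkin orthogonality, in the spirit of Strang's second lemma adapted to the augmented--Lagrangian setting. Since the continuous KKT conditions (\ref{eq:kuhn1})--(\ref{eq:kuhn3}) imply $S_i(\bfu)=\sigma_n(\bfu_i)$, a short calculation shows that the exact solution satisfies $A(\bfu,\bfv)=l(\bfv)$ for every $\bfv \in W+W_h$, so $A(\bfu,\bfw_h)-A(\bfu_h,\bfw_h)=0$ on $W_h$. I will fix an arbitrary $\bfv_h\in W_h$, set $\bfw_h := \bfu_h - \bfv_h \in W_h$, so that $\bfu - \bfu_h = (\bfu - \bfv_h) - \bfw_h$, insert this split into the Galerkin orthogonality, and regroup the terms quadratic in $\bfw_h$ on the left to obtain the identity
\begin{equation*}
a(\bfw_h,\bfw_h) - c(\bfw_h,\bfw_h) + b(\bfu_h,\bfw_h) - b(\bfv_h,\bfw_h)
= a(\bfu - \bfv_h,\bfw_h) - c(\bfu - \bfv_h,\bfw_h) + b(\bfu,\bfw_h) - b(\bfv_h,\bfw_h).
\end{equation*}
On the left, the inverse bound (\ref{eq:c-cont-b}) together with $\gamma_0$ taken large enough gives $a(\bfw_h,\bfw_h) - c(\bfw_h,\bfw_h) \geq C_2 \|\bfw_h\|_a^2$ exactly as in the stability proposition, and the monotonicity (\ref{eq:b-coer}) applied to the pair $(\bfu_h,\bfv_h)$ produces $\sum_{i=1}^2 \gamma_0^{-1}\|S_i(\bfu_h)-S_i(\bfv_h)\|^2_{H^{-1/2}_h(\partial\Omega_{i,C})}$ as a second coercive contribution.

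Next I will estimate each piece on the right by Cauchy--Schwarz and a carefully weighted Young inequality. The $a$-term is straightforward and yields a multiple of $\|\bfu-\bfv_h\|_a^2$; the $c$-term, via continuity (\ref{eq:c-cont}) together with $\|\bfw_h\|_c \lesssim \gamma_0^{-1/2}\|\bfw_h\|_a$ from (\ref{eq:c-cont-b}), produces $\|\bfu-\bfv_h\|_c^2 = \sum_{i=1}^2 \gamma_0^{-1}\|\sigma_n(\bfu_i-\bfv_{h,i})\|^2_{H^{-1/2}_h(\partial\Omega_{i,C})}$, which is one of the approximation quantities in the claimed bound, plus an absorbable $\|\bfw_h\|_a^2$ contribution. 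The delicate piece is $b(\bfu,\bfw_h)-b(\bfv_h,\bfw_h)$. Applying (\ref{eq:b-cont}) and then using that $[\cdot]_-$ is $1$-Lipschitz while $DS_i$ is linear, so that
\begin{equation*}
\|S_i(\bfu)-S_i(\bfv_h)\|_{H^{-1/2}_h(\partial\Omega_{i,C})} \leq \|DS_i(\bfu-\bfv_h)\|_{H^{-1/2}_h(\partial\Omega_{i,C})} \lesssim \|\sigma_n(\bfu_i-\bfv_{h,i})\|_{H^{-1/2}_h(\partial\Omega_{i,C})} + \gamma_0\|[\bfu-\bfv_h]_i\|_{H^{1/2}_h(\partial\Omega_{i,C})},
\end{equation*}
reproduces after squaring and multiplying by $\gamma_0^{-1}$ precisely the two boundary approximation terms appearing in the statement. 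The companion factor $\|DS_i(\bfw_h)\|_{H^{-1/2}_h(\partial\Omega_{i,C})}$ must be absorbed into $\|\bfw_h\|_a^2$: splitting the $L^2$ pairing of $S_i(\bfu)-S_i(\bfv_h)$ with $DS_i(\bfw_h)$ into its $\sigma_n(\bfw_{h,i})$ and $\gamma_0 h_i^{-1}[w_{h,n}]_i$ contributions, the stress part is controlled by (\ref{eq:c-cont-b}) and the jump part by standard discrete trace and inverse estimates combined with Korn's inequality.

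Finally, the triangle inequality $\|\bfu-\bfu_h\|_a \leq \|\bfu-\bfv_h\|_a + \|\bfw_h\|_a$ and its analogue
\begin{equation*}
\|S_i(\bfu)-S_i(\bfu_h)\|_{H^{-1/2}_h(\partial\Omega_{i,C})} \leq \|S_i(\bfu)-S_i(\bfv_h)\|_{H^{-1/2}_h(\partial\Omega_{i,C})} + \|S_i(\bfv_h)-S_i(\bfu_h)\|_{H^{-1/2}_h(\partial\Omega_{i,C})},
\end{equation*}
whose first remainder is again bounded by the Lipschitz estimate above, convert the bounds on $\|\bfw_h\|_a$ and $\|S_i(\bfu_h)-S_i(\bfv_h)\|_{H^{-1/2}_h(\partial\Omega_{i,C})}$ into the advertised bounds on $\|\bfu-\bfu_h\|_a$ and $\|S_i(\bfu)-S_i(\bfu_h)\|_{H^{-1/2}_h(\partial\Omega_{i,C})}$; taking the infimum over $\bfv_h \in W_h$ then closes the argument. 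The main obstacle I anticipate is the Young-weight bookkeeping in the $b$-continuity step: the weights must be tuned so that the $\|\bfw_h\|_a^2$ contribution arising from $\|DS_i(\bfw_h)\|$ is strictly dominated by the coercivity constant $C_2$, while the surviving $\|S_i(\bfu)-S_i(\bfv_h)\|^2$ factor produces exactly the combination $\gamma_0^{-1}\|\sigma_n(\bfu_i-\bfv_{h,i})\|^2 + \gamma_0\|[\bfu-\bfv_h]_i\|^2_{H^{1/2}_h(\partial\Omega_{i,C})}$ appearing on the right-hand side of the theorem.
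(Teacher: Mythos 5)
Your overall architecture (consistency of the exact solution, coercivity of $a-c$ via (\ref{eq:c-cont-b}), monotonicity of $b$, kick-back for $\gamma_0$ large, and splitting $DS_i$ into a stress part and a jump part) is in the right spirit, but there is a genuine gap at the central absorption step, and it originates in applying the monotonicity (\ref{eq:b-coer}) to the pair $(\bfu_h,\bfv_h)$ rather than to $(\bfu,\bfu_h)$. With your regrouping, the term you must estimate on the right is $b(\bfu,\bfw_h)-b(\bfv_h,\bfw_h)$, and the continuity bound (\ref{eq:b-cont}) leaves the companion factor $\|DS_i(\bfw_h)\|_{H^{-1/2}_h(\partial\Omega_{i,C})}$ acting on the \emph{discrete} error $\bfw_h=\bfu_h-\bfv_h$. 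After Young's inequality its contribution is, up to constants, $\gamma_0^{-1}\|\sigma_n(\bfw_{h,i})\|^2_{H^{-1/2}_h(\partial\Omega_{i,C})}+\gamma_0\|[w_{h,n}]_i\|^2_{H^{1/2}_h(\partial\Omega_{i,C})}$. The stress part is indeed absorbable through (\ref{eq:c-cont-b}), but the jump part equals $\gamma_0 h_i^{-1}\|[w_{h,n}]_i\|^2_{\partial\Omega_{i,C}}$ and cannot be absorbed: no trace or inverse estimate bounds $h_i^{-1}\|w_h\|^2_{\partial\Omega_{i,C}}$ by $\|\bfw_h\|^2_a$ with a constant independent of $h_i$ (the trace inequality produces $h_i^{-2}\|\bfw_{h,i}\|^2_{\Omega_i}$, which is not part of the energy, and Korn's inequality does not change the $h$-scaling), and in Case 1 the hybrid component $\bfw_{h,0}$ entering the jump is not controlled by $\|\cdot\|_a$ at all, since $a_0$ is absent ($I_a=\{1,2\}$). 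Nor can you absorb it into your coercive boundary quantity $\gamma_0^{-1}\|S_i(\bfu_h)-S_i(\bfv_h)\|^2_{H^{-1/2}_h(\partial\Omega_{i,C})}$: for the inequality constraint in (\ref{eq:S}) one only has the pointwise bound $|S_i(\bfu_h)-S_i(\bfv_h)|\leq|DS_i(\bfw_h)|$, i.e. the inequality runs the wrong way.

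The paper's proof is arranged precisely to avoid ever taking $DS_i$ of a discrete error. It applies coercivity and monotonicity to the pair $(\bfu,\bfu_h)$ tested with the full error $\bfe=\bfu-\bfu_h$, obtaining $\|\bfe\|_a^2+\sum_{i}\gamma_0^{-1}\|S_i(\bfu)-S_i(\bfu_h)\|^2_{H^{-1/2}_h(\partial\Omega_{i,C})}\leq a(\bfe,\bfe)+b(\bfu,\bfe)-b(\bfu_h,\bfe)$, then splits only the \emph{test} function as $\bfe=\bftheta+\bfe_h$ with $\bftheta=\bfu-\bfv_h$, and uses the Galerkin orthogonality (\ref{eq:galort}) to replace $a(\bfe,\bfe_h)+b(\bfu,\bfe_h)-b(\bfu_h,\bfe_h)$ by $c(\bfe,\bfe_h)$, which is handled with (\ref{eq:c-cont-b}). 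In the remaining term $b(\bfu,\bftheta)-b(\bfu_h,\bftheta)$ the factor needing kick-back is exactly the left-hand quantity $\|S_i(\bfu)-S_i(\bfu_h)\|_{H^{-1/2}_h(\partial\Omega_{i,C})}$, while $DS_i$ acts only on the approximation error $\bftheta$, so the resulting $\gamma_0\|[\theta_n]_i\|^2_{H^{1/2}_h(\partial\Omega_{i,C})}$ legitimately lands on the right-hand side as an approximation quantity. Your Lipschitz bound $\|S_i(\bfu)-S_i(\bfv_h)\|\leq\|DS_i(\bfu-\bfv_h)\|$ and the concluding triangle inequalities are fine, but the argument needs to be restructured along these lines; as written, the absorption of $\|DS_i(\bfw_h)\|_{H^{-1/2}_h(\partial\Omega_{i,C})}$ fails.
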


\begin{proof} We first note that we have the Galerkin orthogonality
\begin{align}\label{eq:galort}
a(\bfu-\bfu_h,\bfv) + b(\bfu,\bfv) - b(\bfu_h,\bfv) - c(\bfu-\bfu_h,\bfv) = 0, \qquad \forall \bfv \in W_h
\end{align}
Next,  we split the error,  $\bfe = \bfu - \bfu_h$,  in an approximation error $\bftheta$ and a discrete error $\bfe_h \in W_h$,
\begin{align}
\bfe := \bfu - \bfu_h = (\bfu - \bfv_h) + (\bfv_h - \bfu_h) = \bftheta + \bfe_h, \qquad \bfv_h \in W_h
\end{align}
Using the coercivity (\ref{eq:a-energynorm}) of $a$, monotonicity (\ref{eq:b-coer}) of $b$, followed by Galerkin orthogonality (\ref{eq:galort}), we get
\begin{align}
&\| \bfe \|_a^2 + \sum_{i=1}^2 \gamma_0^{-1}  \| S_i(\bfu) - S_i(\bfu_h) \|^2_{H^{-1/2}_h(\partial \Omega_{i,C})}
\\
&\leq
a(\bfe,\bfe) + b(\bfu,\bfe) - b(\bfu_h,\bfe) 
\\
&=a(\bfe,\bftheta) + b(\bfu,\bftheta) - b(\bfu_h,\bftheta)
%\\
%&\qquad 
 + \underbrace{a(\bfe,\bfe_h) + b(\bfu, \bfe_h) - b(\bfu_h, \bfe_h)}_{=c(\bfe,\bfe_h) }
\\
&=a(\bfe,\bftheta) + b(\bfu,\bftheta) - b(\bfu_h,\bftheta)  + c(\bfe,\bfe_h) 
\\ \label{eq:errest-a}
&\leq
\| \bfe \|_a \| \bftheta \|_a + \sum_{i=1}^2 \gamma_0^{-1}  \| S_i(\bfu) -  S_i(\bfu_h) \|_{H^{-1/2}_h(\partial \Omega_{i,C})} \|DS_i( \bftheta )\|_{H^{-1/2}_h(\partial \Omega_{i,C})} 
\\
&
\qquad +   \frac12  \|\bftheta\|_c^2 +  \frac32 C \gamma_0^{-1} \|\bftheta \|^2_a +  \frac32 C \gamma_0^{-1} \|\bfe \|^2_a
\end{align}
In (\ref{eq:errest-a}) we used the continuity of $a$ and $b$, and the estimate 
\begin{align}
 c(\bfe,\bfe_h) &=  c(\bftheta,\bfe_h)  + c(\bfe_h,\bfe_h) 
 \\
 &\leq  \|\bftheta\|_c \|\bfe_h\|_c  +  \|\bfe_h \|_c^2
 \\
 &\leq \frac12  \|\bftheta\|_c^2 + \frac32 \|\bfe_h \|_c^2
 \\
  &\leq \frac12 \|\bftheta\|_c^2 + \frac32 C \gamma_0^{-1} \|\bfe_h \|^2_a
   \\
  &\leq  \frac12  \|\bftheta\|_c^2 +  \frac32 C \gamma_0^{-1} \|\bftheta \|^2_a +  \frac32 C \gamma_0^{-1} \|\bfe \|^2_a
\end{align}
where we used the inverse estimate (\ref{eq:c-cont-b}). Splitting the first and second terms in (\ref{eq:errest-a}) using $ab \leq a^2/2 + b^2/2$ 
and taking $\gamma_0$ large enough, we may use a kick-back argument to get 
\begin{align}
&\| \bfe \|_a^2 + \sum_{i=1}^2 \gamma_0^{-1} \| S_i(\bfu) - S_i(\bfu_h) \|^2_{H^{-1/2}_h(\partial \Omega_{i,C})}
\\
&\qquad \lesssim 
 \| \bftheta \|^2_a + \sum_{i=1}^2 \gamma_0^{-1}   \|DS_i( \bftheta )\|^2_{H^{-1/2}_h(\partial \Omega_{i,C})} + \gamma_0^{-1} \| \bftheta \|_{a}^2 + \| \bftheta\|_c^2
\\
&\qquad \lesssim 
(1 + \gamma_0^{-1} ) \| \bftheta \|^2_a
+ \sum_{i=1}^2 \gamma_0^{-1}  \|\sigma_n (\bftheta_i) \|^2_{H^{-1/2}_h(\partial \Omega_{i,C})} 
 +  \gamma_0  \| [\theta_n]_i \|^2_{H^{1/2}_h(\partial \Omega_{i,C})} 
\end{align}
where we used the triangle inequality to conclude that 
\begin{align}
\gamma_0^{-1} \|DS_i(\bfv)\|^2_{H^{-1/2}_h(\partial \Omega_{i,C})} 
&=
\gamma_0^{-1} h \|\sigma_n(\bfv_i) - \gamma_0 h^{-1} [v_n]_i\|^2_{\partial \Omega_{i,C}} 
\\
&\leq
\gamma_0^{-1} h \|\sigma_n(\bfv_i) \|^2_{\partial \Omega_{i,C}}  + \gamma_0 h^{-1} \| [v_n]_i\|^2_{\partial \Omega_{i,C}} 
\\
&=
\gamma_0^{-1}  \|\sigma_n(\bfv_i) \|^2_{H^{-1/2}_h(\partial \Omega_{i,C})}  + \gamma_0 \| [v_n]_i\|^2_{H^{1/2}_h(\partial \Omega_{i,C})} 
\end{align}
Thus, the proof is complete.
\end{proof}
\begin{rem} Letting $\mcT_h(\partial \Omega_{i,C})$ denote the elements in $\mcT_{h,i}$ with 
a face that intersects $\partial \Omega_{i,C}$ and using the element-wise trace inequality 
$\| w \|^2_{\partial T} \lesssim h^{-1} \| w \|^2_T + h \| \nabla w \|^2_T$ for $w \in H^1(T)$, we obtain
\begin{align}
&\gamma_0^{-1} h_i \|\sigma_n(\bfv_i) \|^2_{\partial \Omega_{i,C}}  + \gamma_0 h_i^{-1} \| [v_n]_i\|^2_{\partial \Omega_{i,C}} 
\\
&\lesssim
\gamma_0^{-1} h_i \|\sigma_n(\bfv_i)\|^2_{\partial \Omega_{i,C}}
+
\gamma_0  h_i^{-1}  \| v_{i,n} \|^2_{\partial \Omega_{i,C}} 
+ 
\gamma_0 h_i^{-1}  \| v_{0,n} \|^2_{\Omega_0} 
\\
&\lesssim 
 \gamma_0^{-1} (
\| \nabla \bfv_i \|^2_{\mcT_h(\partial \Omega_{i,C})}
+ 
h_i^2 \| \nabla^2 \bfv_i \|^2_{\mcT_h(\partial \Omega_{i,C})}
)
\\ 
&\qquad 
+
\gamma_0 (
h_i^{-2} \| \bfv_i \|^2_{\mcT_h(\partial \Omega_{i,C})} 
+ 
\| \nabla \bfv_i \|^2_{\mcT_h(\partial \Omega_{i,C})}
)
+
\gamma_0 h_i^{-1} \| v_{0,n} \|^2_{\Omega_0}
\end{align}
Setting $\bfv = \bftheta = \bfu - \bfv_h$,  and taking $\bfv_h$ to an interpolant $\pi_{h} \bfu$ 
of the exact solution $\bfu$ that satisfies optimal order interpolation error 
bounds (\ref{eq:interpol}), we conclude that our best approximation result, indeed 
leads to the optimal order energy error estimate,
\begin{align}
\|\bfu - \bfu_h\|_a^2 
&+ \sum_{i=1}^2 \gamma_0^{-1}  \|S_i(\bfu) -  S_i(\bfu_h) \|_{H_h^{-1/2}(\partial \Omega_{i,C})}^2
\\
&\qquad
\lesssim 
\begin{cases}
\sum_{i=1}^2 h_0^{2 p_0 +1} (h_0/h_i) + \sum_{i=1}^2 h_i^{2 p_i} & \text{Case 1}
\\
h_0^{2(p_0+1-s_0)} +  \sum_{i=1}^2 h_0^{2 p_0 +1} (h_0/h_i)+ \sum_{i=1}^2 h^{2 p_i} & \text{Case 2}
\end{cases}
\end{align}
Here,  in Case 2,  $a_0$ is a form defined on $H^{s_0}(\Omega_0)$ and we assume that $V_{h,0} \subset H^{s_0}(\Omega_0)$.
In Case 1 when  $a_0$ is not present the estimate simplifies. For both cases we can take piecewise constants in 
$V_{0,h}$ and still get convergence,  see the numerical example Section \ref{sec:constants}.  Note that we can take $h_0$ arbitrarily small 
compared to $h_i$,  but not the other way around.
\end{rem}

\section{Numerical Examples}
\label{sec:numerics}

One domain is fixed using Dirichlet boundary conditions in the numerical examples presented. To remove the rigid body motions of the other, we use scalar.
Lagrange multipliers ensure that the mean horizontal displacements are zero.

\subsection{A 2D Hertz Problem}
\label{sec:constants}
We consider a half-sphere in contact with a rectangular block. The geometry and the meshes used are given in Fig. \ref{fig:meshes}. The block is fixed vertically at its bottom boundary and horizontally at $(x,y)=(0,-1)$.
The half-sphere is loaded by a line load $\bff=(0,-50)$ at $y=1$ and has elastic moduli $E=2000$ and $\nu=0.3$. It is meshed using $P^1$ triangles. The block has elastic moduli $E=7000$, $\nu=0.3$ and is discretized using $Q_1$ elements. 
We assume plane strain so that $\lambda_i=\nu_iE_i/((1+\nu_i)(1-2\nu_i))$ and $\mu_i=E_i/(2(1+\nu_i))$,
and we set $\gamma_i=10E_i$.
For the approximation of $\bfn\cdot\bfu_0$ we use,
for illustrative purposes, the suboptimal choice of equidistributed piecewise constants on $x\in (-0.4,0.4)$. This approximation is tied to the upper boundary of the block by an equality constraint. In Figs. \ref{fig:50close}--\ref{fig:1000close} we show close-ups of part of the contact zone in the deformed state for different numbers of constants. Finally, in Figs. \ref{fig:50press}--\ref{fig:1000press} we show a comparison with the corresponding Hertz solution for two cylinders in contact, the upper with radius $r=1$ and the lower with $r=\infty$ (cf., e.g., \cite{Popov10}). This is not identical to the setup in the problem we solve but indicates the accuracy of the solution.

\begin{figure}
  \centering
\includegraphics[width=5in]{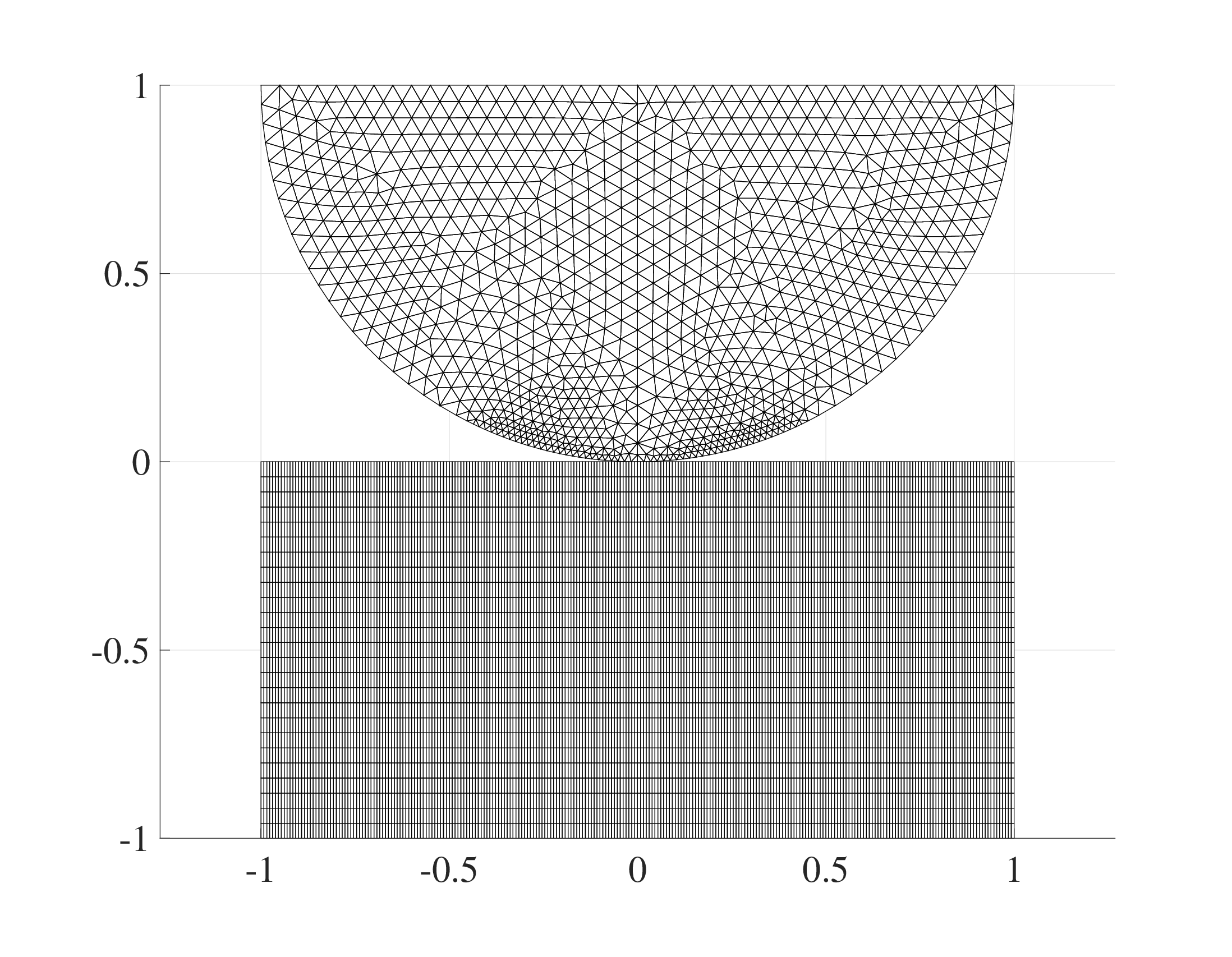}
    \caption{Geometry and meshes used for the Hertz problem.}
  \label{fig:meshes}
\end{figure}
\begin{figure}
  \centering
\includegraphics[width=5in]{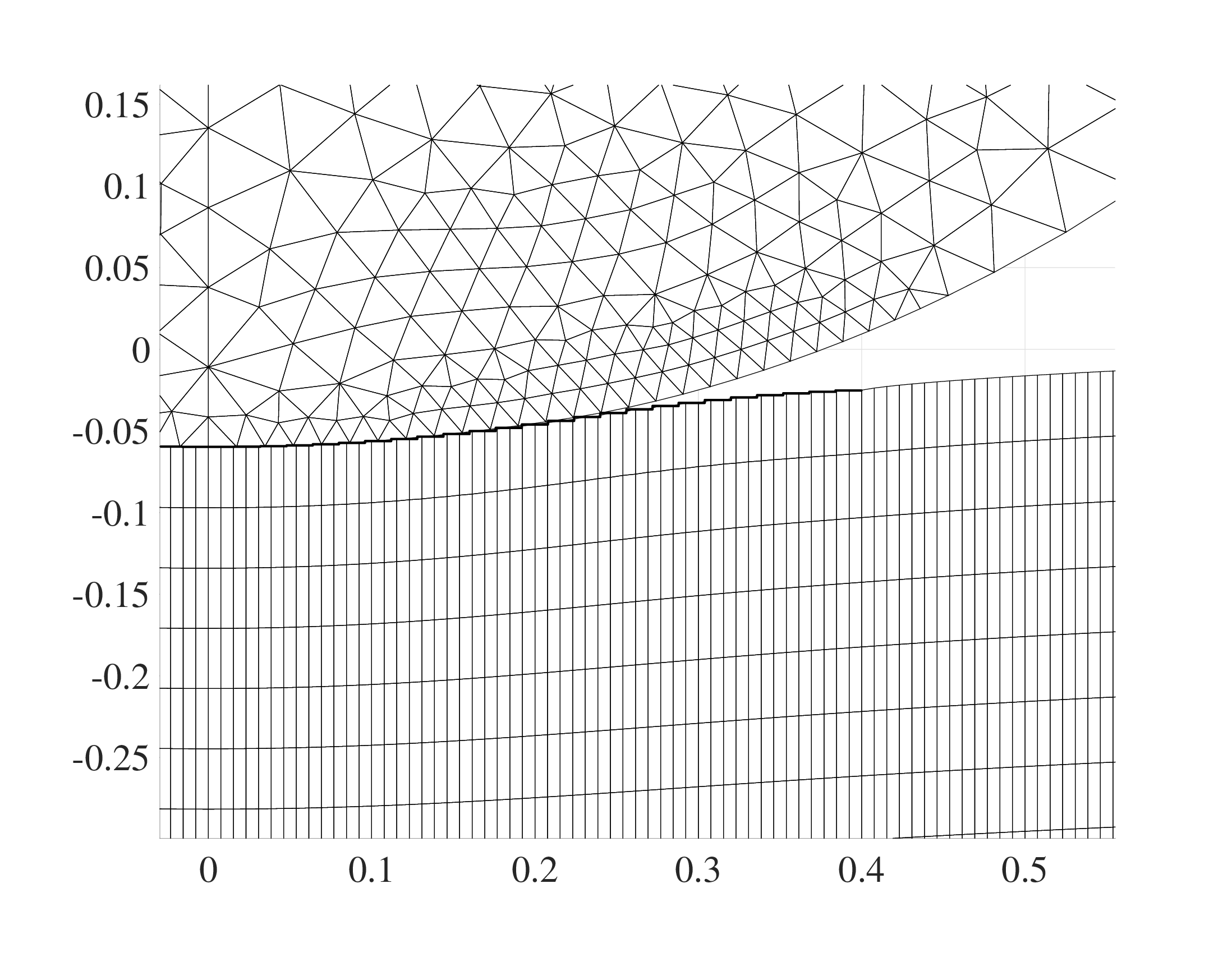}
    \caption{Close up of the solution with 50 constants.}
  \label{fig:50close}
\end{figure}
\begin{figure}
  \centering
\includegraphics[width=5in]{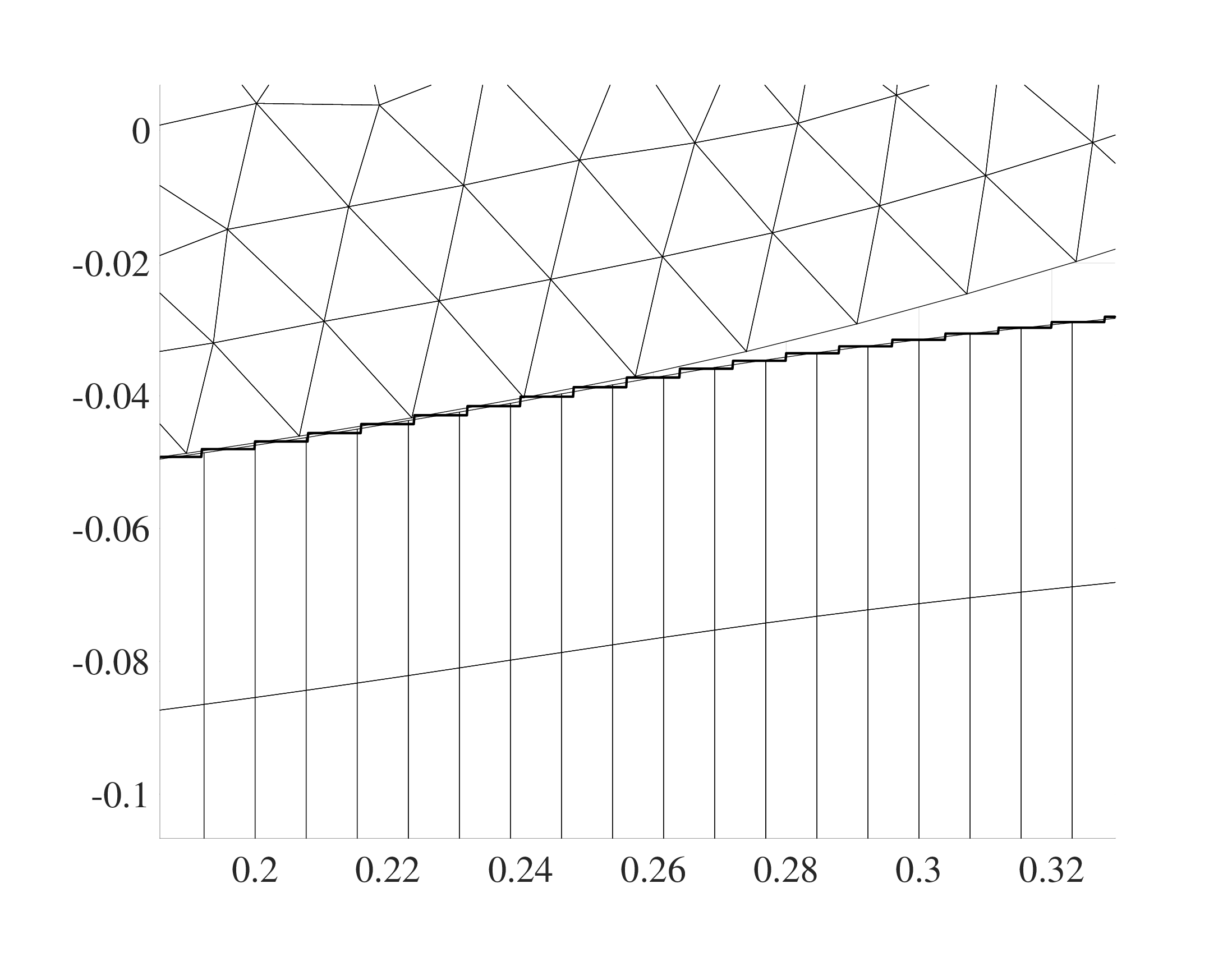}
    \caption{Cose up of the solution with 100 constants.}
  \label{fig:100close}
\end{figure}
\begin{figure}
  \centering
\includegraphics[width=5in]{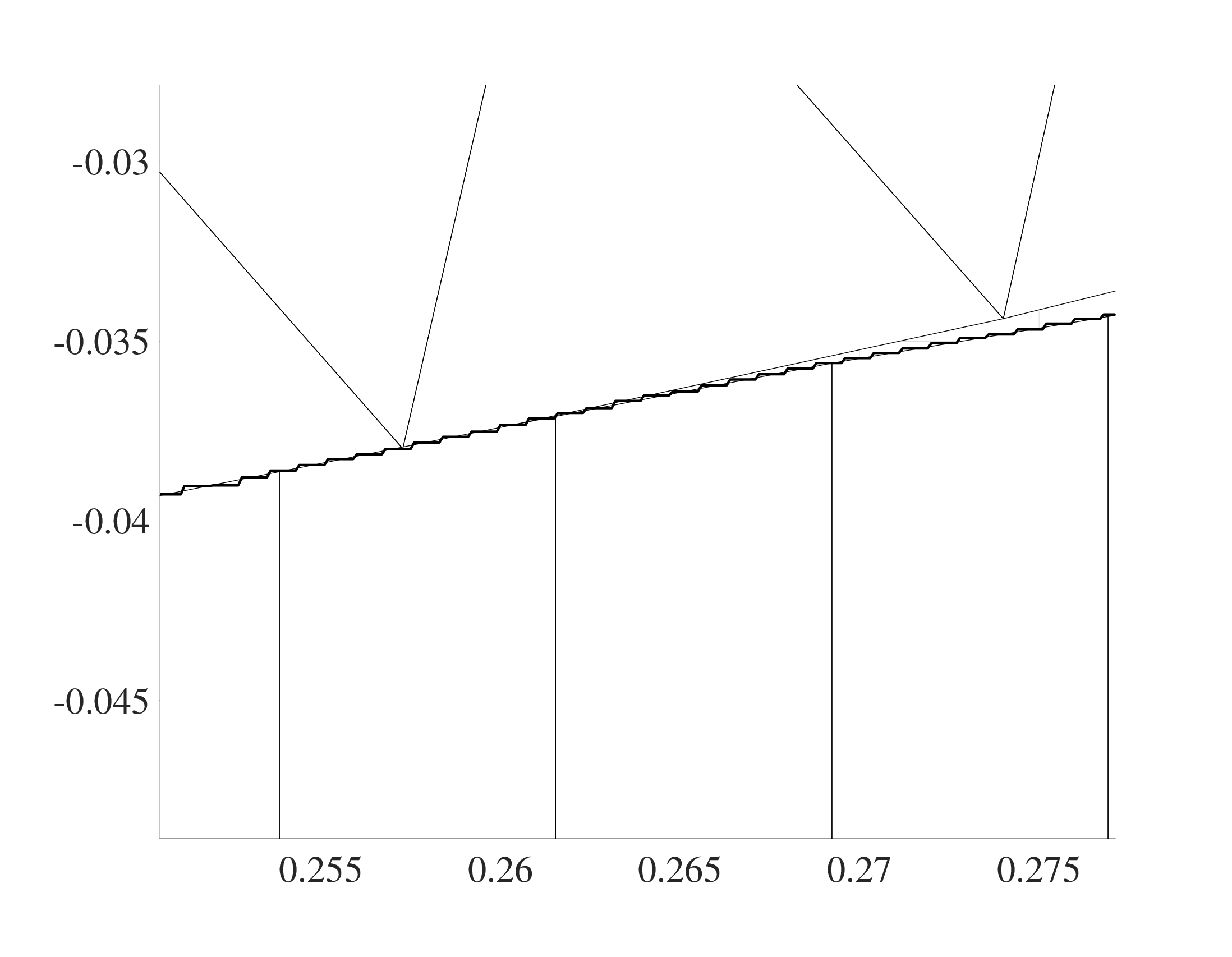}
    \caption{Close up of the solution with 1000 constants.}
  \label{fig:1000close}
\end{figure}
\begin{figure}
  \centering
\includegraphics[width=5in]{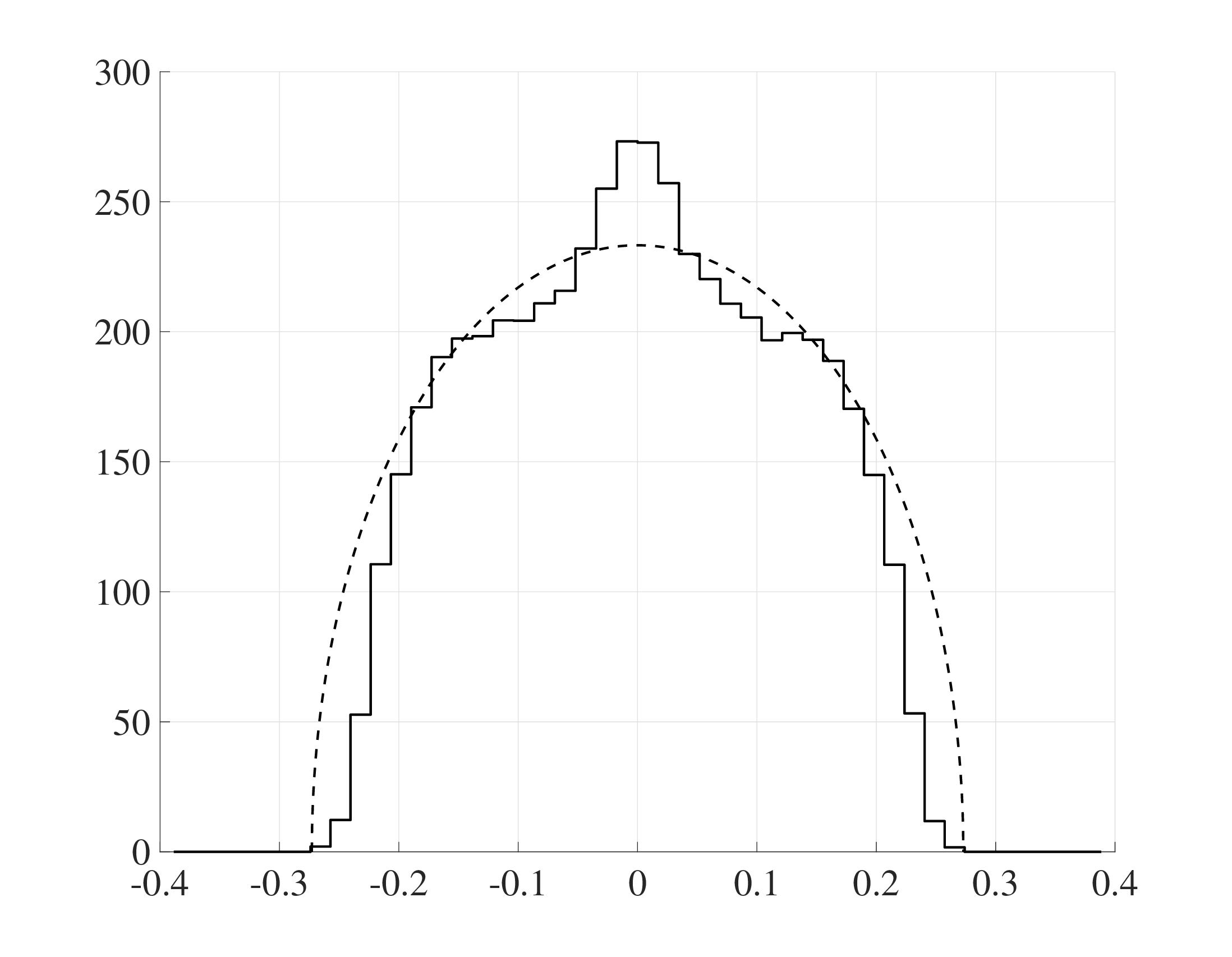}
    \caption{Computed and analytical contact pressures with 50 constants.}
  \label{fig:50press}
\end{figure}
\begin{figure}
  \centering
\includegraphics[width=5in]{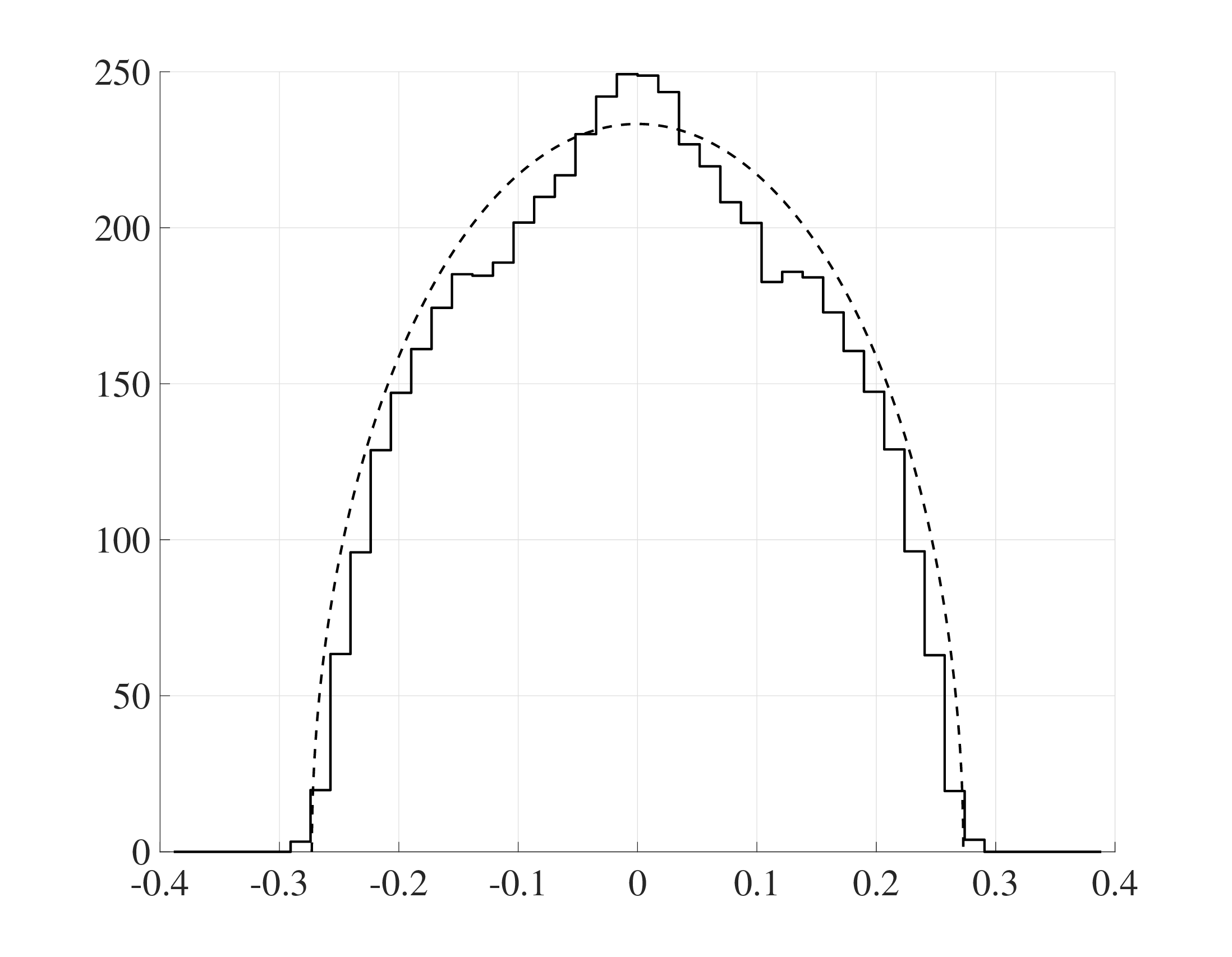}
    \caption{Computed and analytical contact pressures with 100 constants.}
  \label{fig:100press}
\end{figure}
\begin{figure}
  \centering
\includegraphics[width=5in]{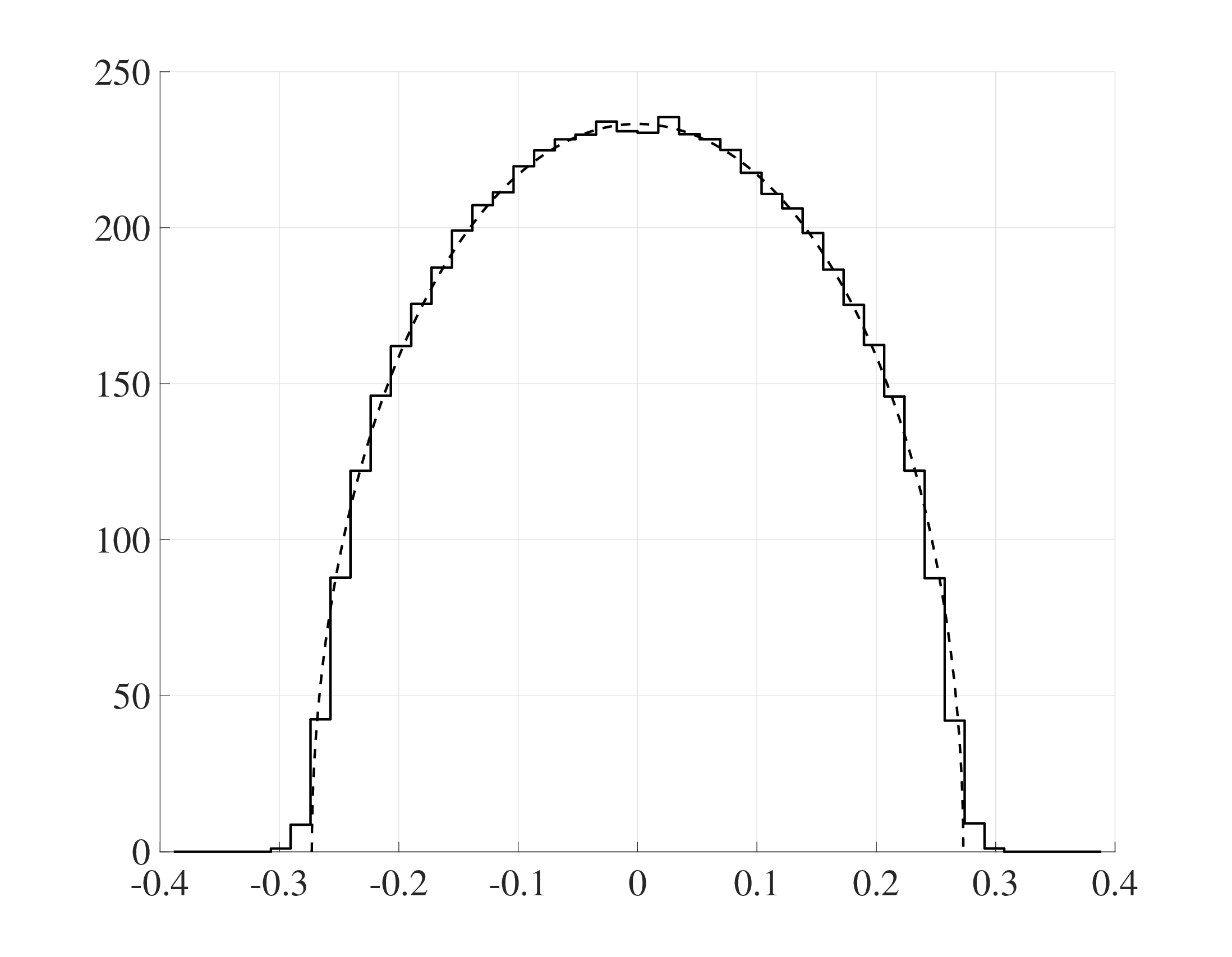}
    \caption{Computed and analytical contact pressures with 1000 constants.}
  \label{fig:1000press}
\end{figure}
%%%%%%%%%%%%%%%%%

\subsection{Contact Problems with Model Coupling}\label{sec:model}

\subsubsection{The Plate Model}

In this example, we use a plate model to calculate the hybrid variable. We consider the Kirchhoff plate model, posed on a rectangular domain 
$\Omega_0$, in the $(x,y)-$plane, where we seek an out--of--plane 
(scalar) displacement $u_0$, with $\bfu_0 = (0,0,u_0)$,  to which we associate the strain (curvature) tensor
\begin{equation}
\bfeps_p(\nabla u_0) := \frac12\left(\nabla\otimes (\nabla u_0) + (\nabla u_0 ) \otimes \nabla \right) 
= \nabla \otimes \nabla u_0 = \nabla^2 u_0   
\end{equation}
and the plate stress (moment) tensor
\begin{align}\label{eq:plate-stress-tensor}
\bfsig_p (\nabla u_0) := &D \left(\bfeps(\nabla u_0) + \nu (1- {\nu_0 })^{-1}
\div\nabla u _0\, \bfI \right)
\\
= &D \left( \nabla^2 u_0 + \nu (1-\nu_0)^{-1} \Delta u_0 \bfI \right)
\end{align}
where 
\begin{equation}\label{eq:mcCP}
D :=  \frac{E_0 t^3}{12(1+\nu_0)} 
\end{equation}
with $E_0$ the Young's modulus, $\nu_0$ the Poisson's ratio, and $t$ the 
plate thickness. 
The equilibrium equations of a free Kirch\-hoff plate
take the form
\begin{align}
\div \Div \bfsig_p ( \nabla u_0 )= 0 &  \qquad \text{in $\Omega_0$}
\end{align}
where $\Div$ and $\text{div}$ denote the divergence of a tensor and a vector 
field, respectively.
Multiplying by $v$ and integrating by parts, the form $a_0(\cdot,\cdot)$ is found as
\begin{equation}
a_0(u_0,v)  := (\bfsig_p(\nabla u_0), 
\bfeps_p(\nabla v))_{\Omega_0}
\end{equation}

We consider a problem consisting of $\Omega_1$ being a  stiff ball of radius $r=1$, $\Omega_2$ a block of dimensions $(-2.5,2.5)\times(-2.5,2.5)\times (-1,0)$, and $\Omega_0$ a plate resting on the block. Fig. \ref{fig:platemehses} shows the configuration and meshes used.
The ball has constitutive parameters $E_1=20000$, $\nu_1=0.33$, the block has $E_2=25$, $\nu=0.33$, and the plate has $E_0=1000$, $t=0.1$, and $\nu=0.5$. The ball is being pushed downwards by a volume force $\bff_1 = (0,0,-10)$.
We take $\gamma_i=100E_i$. The block and the ball are discretized using $P^1$ tetrahedral elements, and the plate is discretized using Bogner-Fox-Schmit elements \cite{BognerFoxSchmit65}. Dirichlet boundary conditions $\bfu_2={\bf 0}$
are applied to the block at the bottom and at the sides.

In Fig. \ref{fig:confdef}, we show the deformation of the whole configuration; in Fig. \ref{fig:platedef}, we show the deformation of the plate and the block.

\begin{figure}
  \centering
\includegraphics[width=5in]{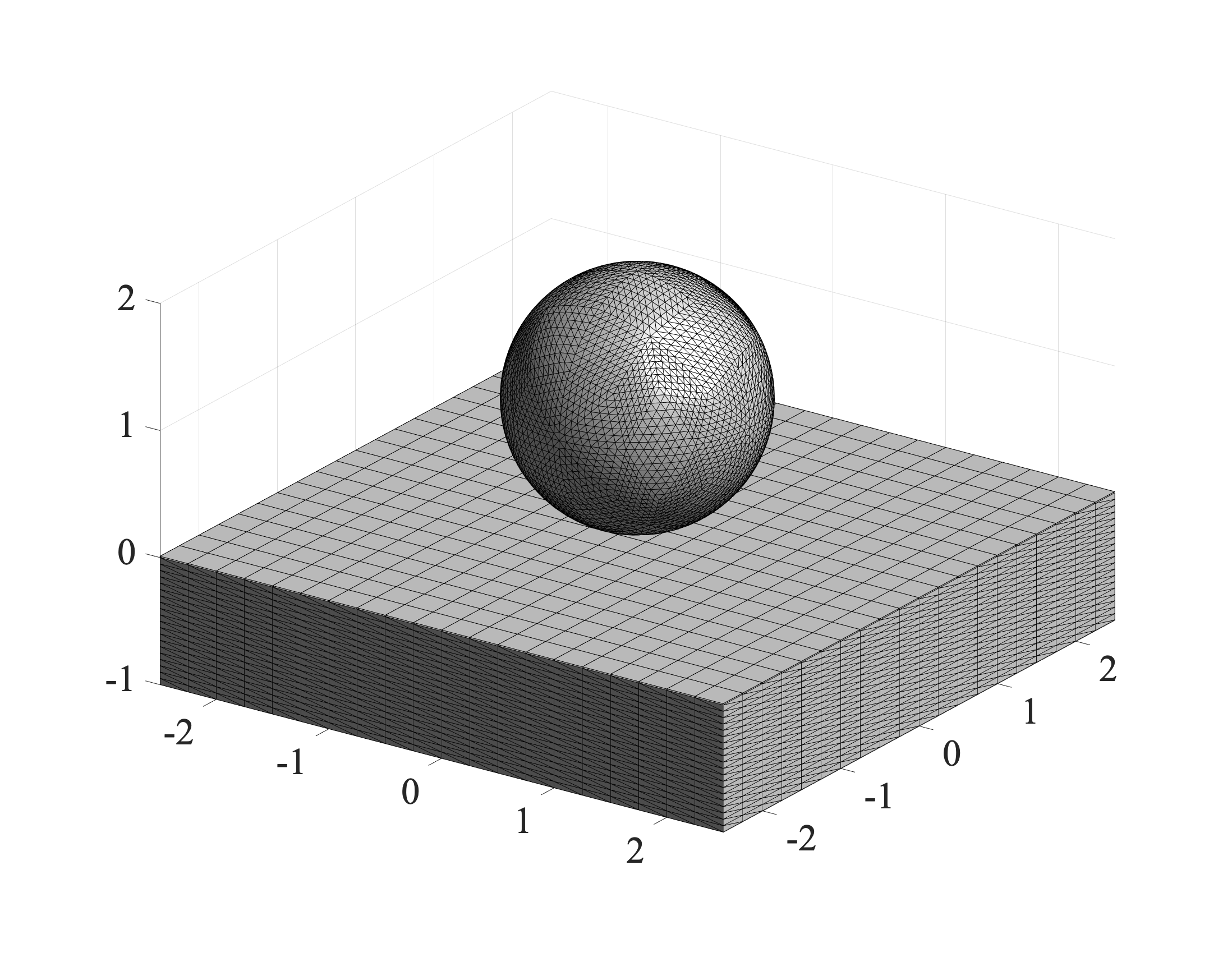}
    \caption{Geometry and meshes used for the plate contact problem.}
\label{fig:platemehses}
 \end{figure}
\begin{figure}
  \centering
\includegraphics[width=5in]{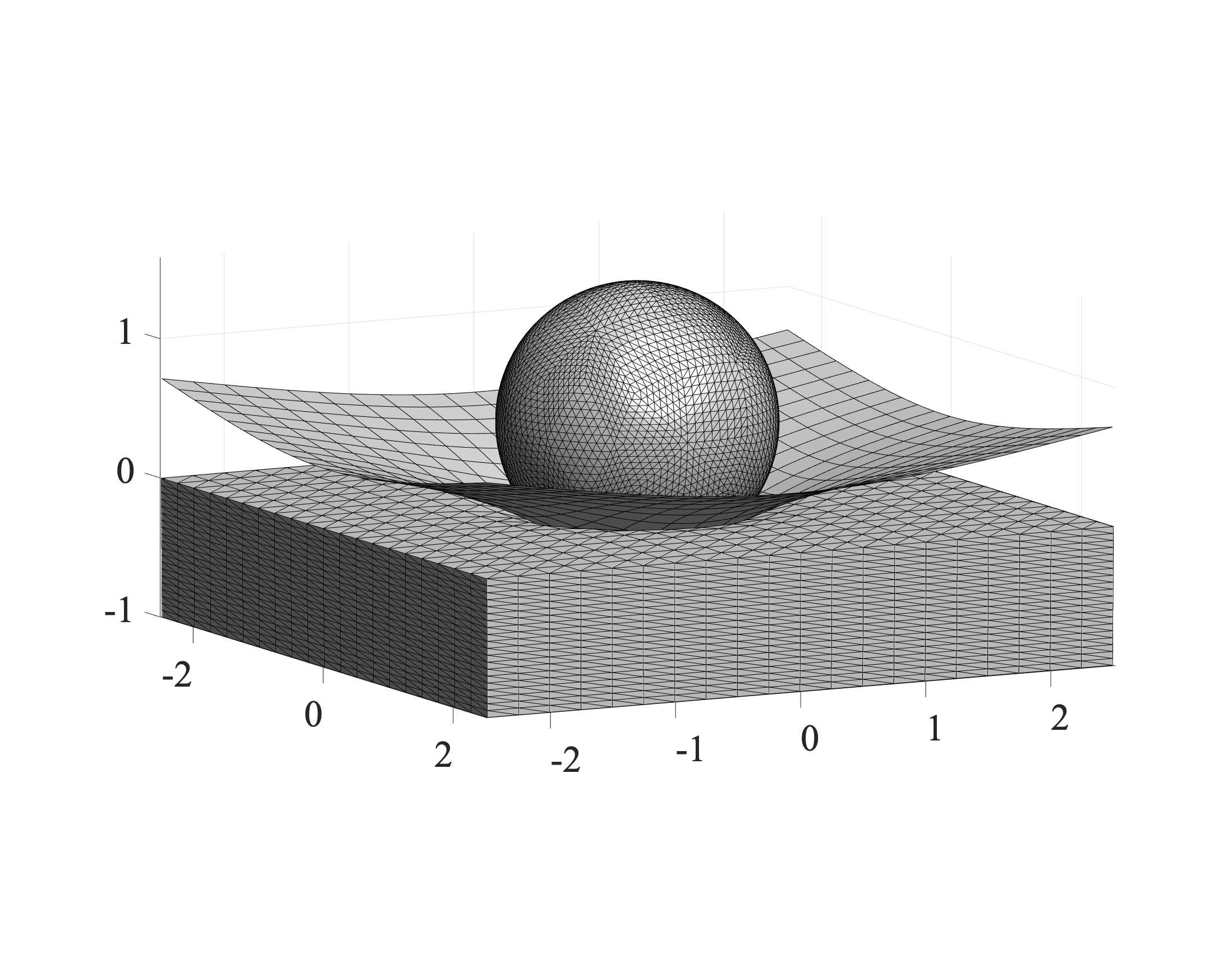}
    \caption{Deformation of the configuration.}
 \label{fig:confdef}
\end{figure}
\begin{figure}
  \centering
\includegraphics[width=3in]{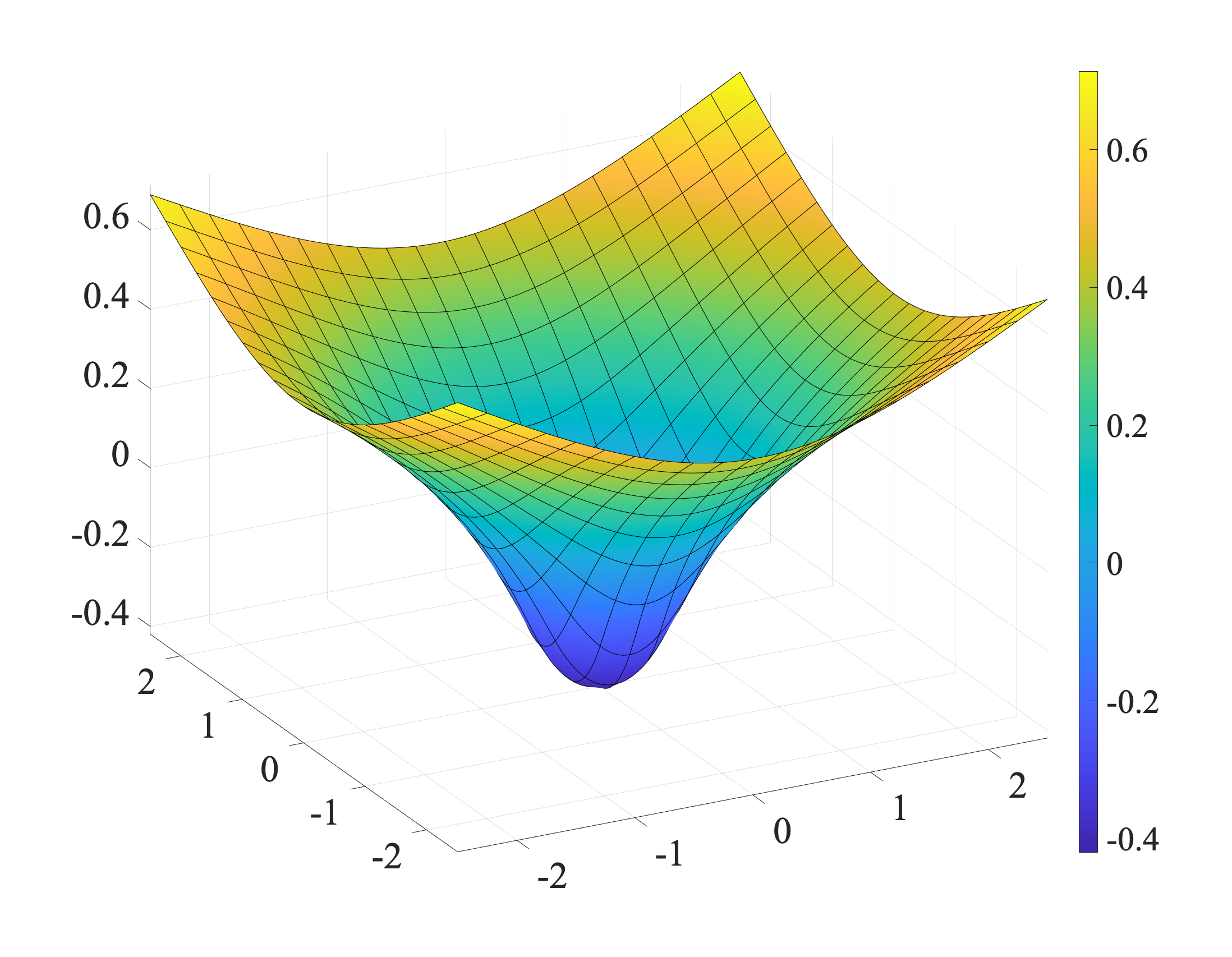}\includegraphics[width=3in]{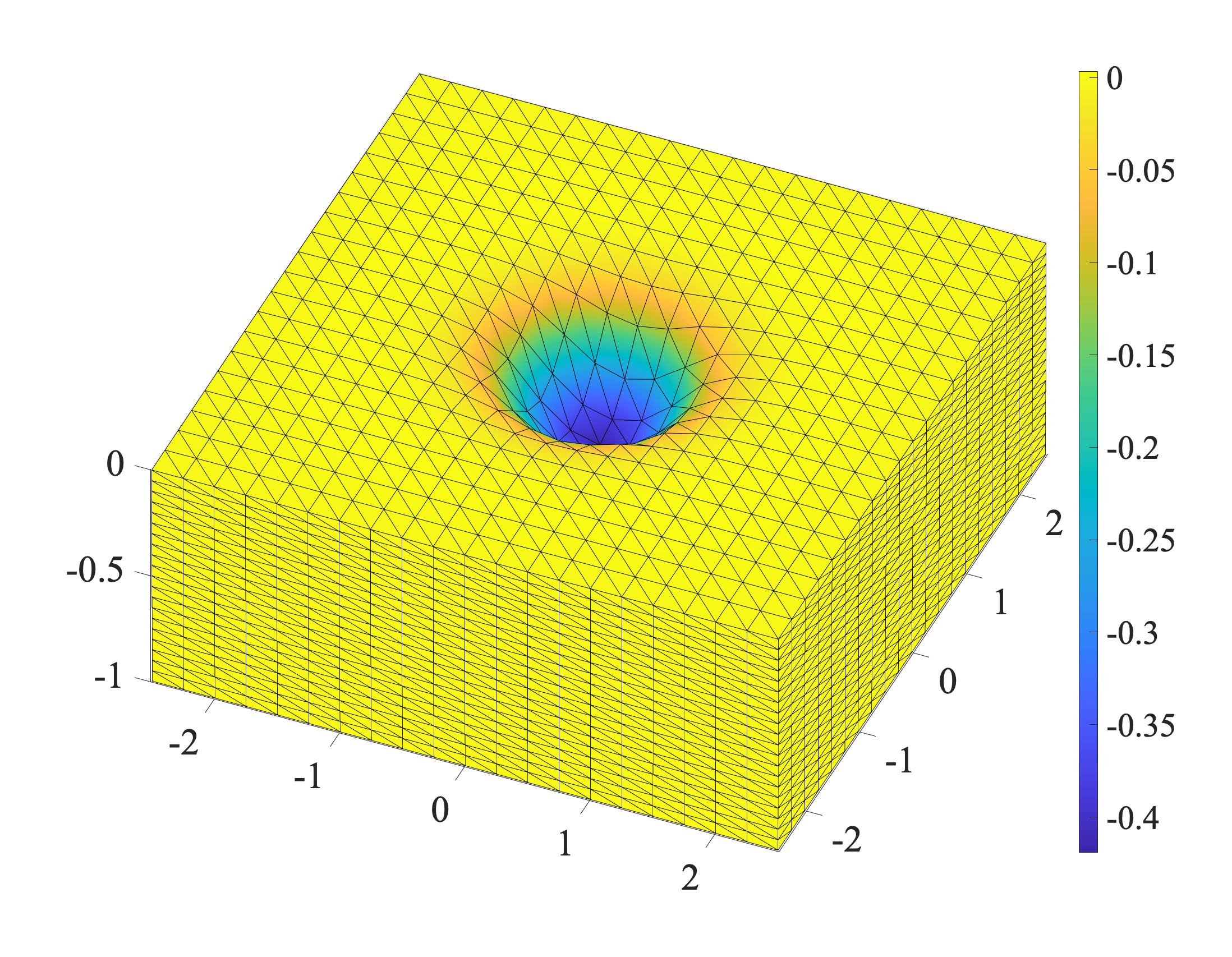}
    \caption{Deformation of the plate and the block.}
 \label{fig:platedef}
\end{figure}

\subsubsection{The Membrane Model}
In what follows, $\Gamma :=\Omega_0$ denotes a closed and oriented surface,
for simplicity without boundary, which is embedded in ${{\IR}}^{3}$ and equipped with exterior
normal $\bfn_\Gamma$. The membrane is assumed to occupy the domain $\Omega_t = \Gamma\times (-t/2,t/2)$ with $t$ the thickness of the membrane, assumed constant for simplicity.
We let $\rho$ denote the signed distance function
fulfilling $\nabla\rho\vert_{\Gamma} =\bfn_\Gamma$.

For a given function $u: \Gamma \to \IR$, we assume that there exists an extension $\bar{u}$, in some neighborhood of $\Gamma$, such that $\bar{u}\vert_\Gamma = u$.
 The the tangent gradient $\nabla_\Gamma$ on $\Gamma$ can be defined
by
\begin{equation}
\nabla_\Gamma u = \Ps \nabla \overline{u}
\label{eq:tangent-gradient}
\end{equation}
with $\nabla$ the ${{\IR}}^{3}$ gradient and $\Ps = \Ps(\bfx)$ the
orthogonal projection of $\IR^{3}$ onto the tangent plane of $\Gamma$ at $\bfx \in \Gamma$
given by
\begin{equation}
  \Ps = \bfI - \bfn_{\Gamma} \otimes \bfn_{\Gamma}
\end{equation}
where $\bfI$ is the identity matrix. 
The tangent gradient defined by \eqref{eq:tangent-gradient} is easily shown to be independent of the extension $\overline{u}$.
In the following, we shall not distinguish between functions on $\Gamma$ and their extensions when defining differential operators.

The surface gradient has three
components, which we shall denote by
\begin{equation}
\nabla_\Gamma u=: \left(
\frac{\partial u}{\partial x^\Gamma} ,
\frac{\partial u}{\partial y^\Gamma} ,
\frac{\partial u}{\partial z^\Gamma}\right) 
\end{equation}
For a vector-valued function $\bfv(\bfx)$, we define the
the tangential Jacobian matrix as the transpose of the outer product of $\nabla_\Gamma$ and $\bfv$,
\begin{equation}
\left(\nabla_\Gamma\otimes\bfv\right)^{\text{T}} :=\left[\begin{array}{>{\displaystyle}c>{\displaystyle}c>{\displaystyle}c}
\frac{\partial v_1}{\partial x^\Gamma} &\frac{\partial v_1}{\partial y^\Gamma} & \frac{\partial v_1}{\partial z^\Gamma} \\[3mm]
\frac{\partial v_2}{\partial x^\Gamma} &\frac{\partial v_2}{\partial y^\Gamma} & \frac{\partial v_2}{\partial z^\Gamma} \\[3mm]
\frac{\partial v_3}{\partial x^\Gamma} &\frac{\partial v_3}{\partial y^\Gamma} & \frac{\partial v_3}{\partial z^\Gamma}
\end{array}\right] ,
\end{equation}
the surface divergence $\nabla_{\Gamma}\cdot\bfv := \text{tr}\nabla_\Gamma\otimes\bfv$, and the in-plane strain tensor
\begin{equation}
\bfeps_{\Gamma}(\bfu) := \Ps\bfeps(\bfu)\Ps ,\quad\text{where}\quad \bfeps(\bfu) := \frac12\left(\nabla\otimes \bfu + (\nabla \otimes\bfu)^{\rm T}\right) 
\end{equation}
is the 3D strain tensor. The corresponding stress tensor is given by
\begin{equation}
\bfsig_\Gamma  =  2\mu_\Gamma \bfeps_\Gamma + {\lambda_\Gamma} \text{tr}\bfeps_\Gamma\, \Ps
\end{equation}
where, with Young's modulus $E_\Gamma$ and Poisson's ratio $\nu_\Gamma$, 
\begin{equation}
\mu_\Gamma = \frac{E_\Gamma t}{2(1+\nu_\Gamma)},\quad {\lambda_\Gamma}= \frac{E_\Gamma\nu_\Gamma t}{1-\nu_\Gamma^2}
\end{equation}
are the Lam\'e parameters in plane stress (multiplied by the thickness). 

The equilibrium equations on an unloaded membrane are given by (cf. \cite{HaLa14})
\begin{equation}
  \label{eq:LB}
-\nabla_\Gamma\cdot \bfsig_{\Gamma}(\bfu)   = {\bf 0}\quad \text{on $\Gamma$,}
\end{equation}
By multiplying the equilibrium equation by $\bfv$ and integrating by parts, we find the form $a_0(\cdot,\cdot)$ as 
\begin{equation}
a_0(\bfu,\bfv)=(2\mu_\Gamma\bfeps_{\Gamma}(\bfu),\bfeps_{\Gamma}(\bfv))_{\Gamma}+(\lambda_\Gamma\nabla_{\Gamma}\cdot\bfu,\nabla_{\Gamma}\cdot\bfv)_{\Gamma}
\end{equation}

We consider a problem with $\Omega_1$ a ball and $\Omega_2$ a block of the same dimensions as in the previous Section. The ball and block have fixed constitutive parameters $E_1=1000$, $\nu_1=0.33$ and $E_2=100$, $\nu_2=0.3$.
The ball is covered by a membrane with $t=0.1$ and $\nu_\Gamma=0.5$, whose Young's modulus we vary to show its stiffening effect. The discretization of the ball and the block are as in the previous section, and we let the surface mesh of the ball
serve as a $P^1$ mesh for the membrane. We use $\gamma_i=100E_i$. The ball is again loaded with a volume load $\bff_2=(0,0,-10)$.

In Fig. \ref{fig:zero}, we show the deformations when the membrane has zero stiffness (acts as a standard hybrid variable), in Fig. \ref{fig:2000}, we show the deformations when $E_\Gamma=2000$, and, finally, in Fig. \ref{fig:20000} when $E_\Gamma=20000$. The stiffening effect is clearly visible.

\begin{figure}
  \centering
\includegraphics[width=5in]{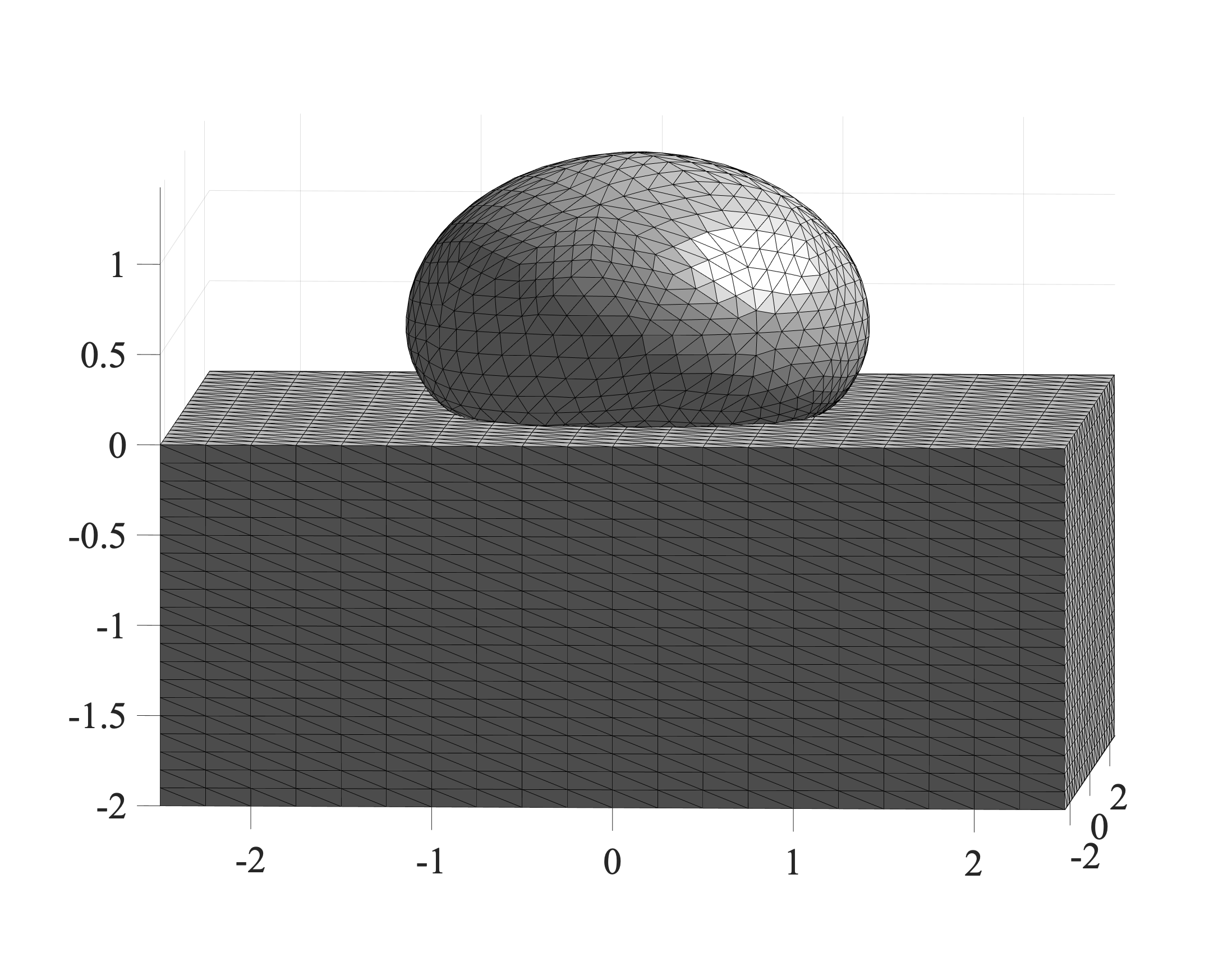}
    \caption{Deformations with zero stiffness of the membrane.}
 \label{fig:zero}
\end{figure}
\begin{figure}
  \centering
\includegraphics[width=5in]{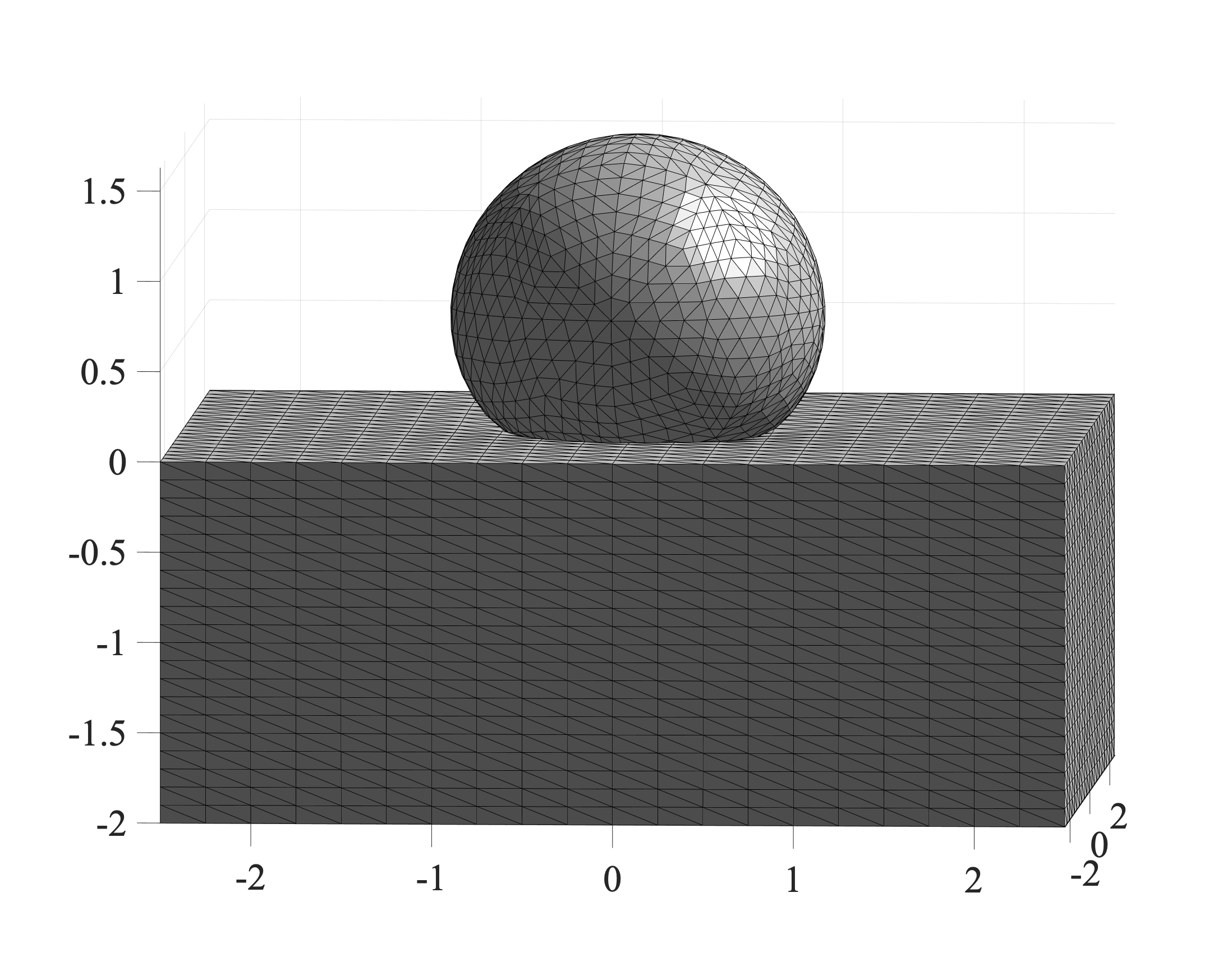}
    \caption{Deformations with $E_\Gamma = 2000$.}
 \label{fig:2000}
\end{figure}
\begin{figure}
  \centering
\includegraphics[width=5in]{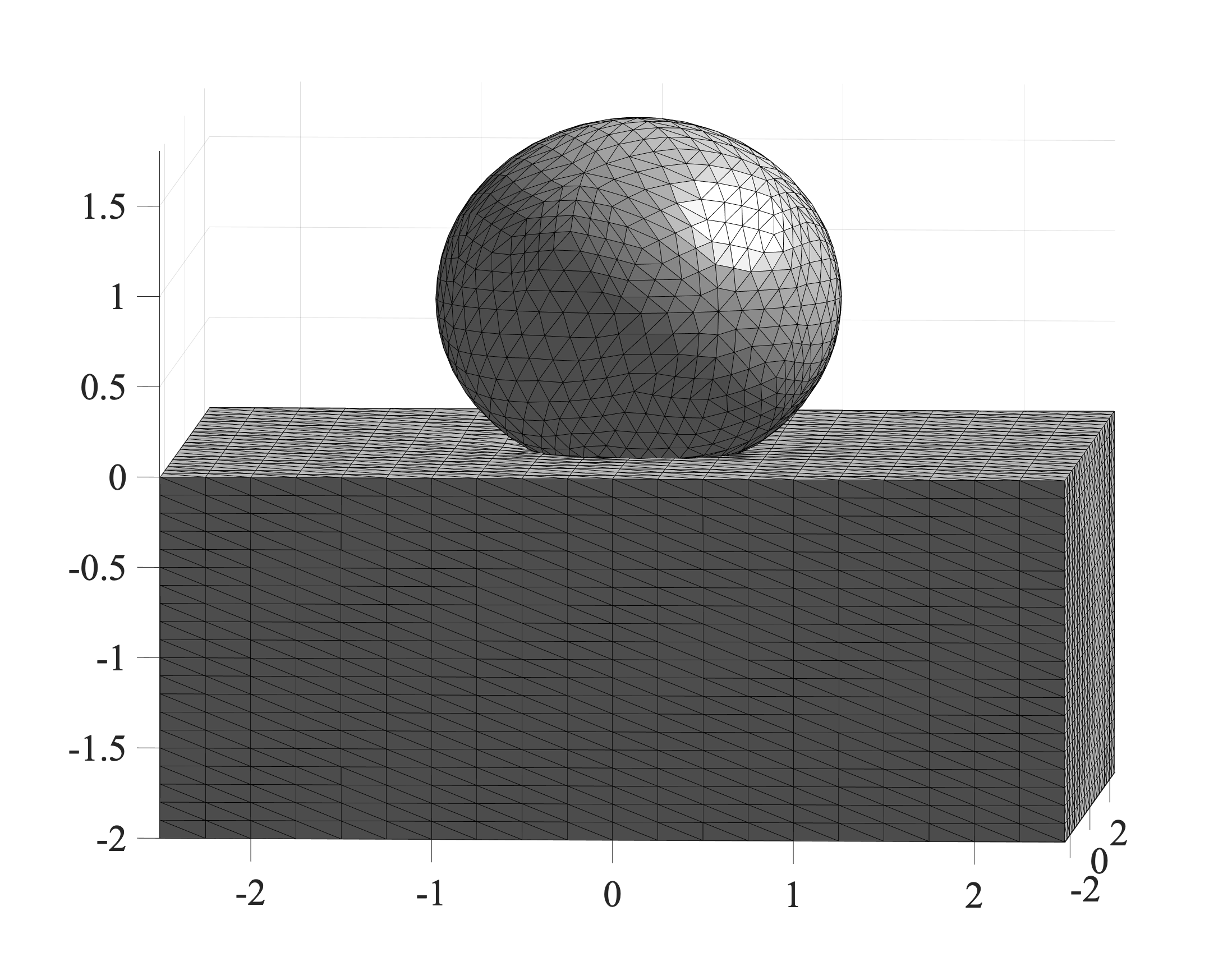}
    \caption{Deformations with $E_\Gamma = 20000$.}
 \label{fig:20000}
\end{figure}

\section{Conclusions}
\label{sec:conclusions}

In this work, we have developed a novel augmented Lagrangian framework for modeling friction-free contact between two elastic bodies. The core of our approach is a Nitsche-based method with a hybrid displacement variable defined on an interstitial layer. This formulation introduces significant flexibility by decoupling the computational domains, allowing the bodies in contact to interact exclusively through the layer. The independent approximation of the layer avoids challenges associated with the intersection of unrelated meshes and opens possibilities for additional modeling, such as incorporating a membrane or other interface effects.

We demonstrated the stability and accuracy of the method by proving stability estimates and deriving error bounds. These theoretical results underline the robustness and convergence of the approach, making it suitable for complex contact problems where traditional methods may face limitations.

Moreover, the hybrid variable approach offers a natural framework for incorporating additional physical phenomena or constraints at the interface, such as thin structures or surface layers, without altering the underlying contact model. This adaptability enhances the potential applicability of the method to a wide range of problems in computational mechanics.

The numerical examples illustrate the proposed method's practical performance, showcasing its ability to handle challenging scenarios with minimal mesh constraints and good agreement with theoretical predictions. These examples highlight the method's versatility in different interface approximations and configurations.

Future work will focus on extending the proposed approach to contact problems involving friction, nonlinear materials, or dynamic interactions. Additionally, exploring further applications of the hybrid interface variable to other types of coupled multiphysics problems, such as fluid-structure interaction or thermal contact, could provide valuable insights and broaden the scope of this method.

In summary, this paper's augmented Lagrangian hybrid Nitsche method offers a flexible, stable, and computationally efficient approach to friction-free contact problems. We believe this framework represents a significant step forward in addressing the challenges associated with contact mechanics and provides a solid foundation for further research and application.

\appendix
\section{Additional Verifications}
\label{sec:kkt}
In this appendix, we include technical details for the reader's convenience.
\begin{itemize}
%\item  {\bf (\ref{eq:rocka}).} For $a,b\in \IR$, 
%\begin{align}
%a \leq 0, \quad b \leq 0, \quad ab = 0 \quad \Longleftrightarrow \quad
%a = - [b-a]_-
%\end{align}
%\begin{proof} 1. Assume that $a = - [b-a]_+$ then we directly have $a\leq 0$. If $a=0$ then $ 0 = [b-0]_+ = [b]_+$, which means that $b \leq 0$. If $a<0$ then $b-a>0$ and therefore $a=-[b-a]_+ = - (b-a)$ which imply $b=0$ and therefore $ab = 0$. 
%
%2. $ab = 0$ imply either $a$ or $b$ is $0$. If $a=0$ and $b\leq 0$ then 
%$-[b -0]_+ =-[b]_+ = 0$. If $b=0$ and $a\leq 0$ then $a=-[-a]_+$.
%\end{proof}

\item  {\bf (\ref{eq:rocka}).} For $a,b\in \IR$, 
\begin{align}
a \leq 0, \quad b \leq 0, \quad ab = 0 \quad \Longleftrightarrow \quad
a = [a-b]_-
\end{align}
\begin{proof} 1. Assume that $a = [a-b]_-$ then we directly have $a\leq 0$. If $a=0$ then $ 0 = [0-b]_- = [b]_+$, which means that $b \leq 0$ and clearly $ab = 0$. If $a<0$ then $a-b<0$ and therefore $a=[a-b]_- = a-b$ which imply $b=0$ and therefore $ab = 0$. 

2. Assume $a\leq 0$, $b\leq 0$, and $ab = 0$. Then $ab=0$ imply either $a$ or $b$ is $0$. If $a=0$ and $b\leq 0$, then 
$[a-b]_- = [-b]_- = [b]_+ = 0$, and it follow that $a = [a-b]_-$. If $b=0$ and $a \leq 0$ then 
$a = [a-b]_- = [a]_- a$.
\end{proof}

\item {\bf (\ref{eq:b-coer}).} For an affine mapping $B:\IR^n \rightarrow \IR^m$ it holds
\begin{equation}
\| [B(\bfv)]_- - [B(\bfw)]_- \|^2_{\IR^m}
\leq
([B(\bfv)]_- - [B(\bfw)]_-, DB(\bfv - \bfw) )_{\IR^m}
\end{equation}
\begin{proof} Note that, for $a,b\in \IR$, 
\begin{align}
([a]_- - [b]_-)(a-b)
&=
[a]_- a - [a]_- b - [b]_- a + b [b]_-
\\
&\geq [a]_-^2 - 2 [a]_-[b]_- + [b]_-^2
\\
&= ([a]_- - [b]_-)^2
\end{align}
where we used the identity $[a]_- a = [a]_-^2$, and the inequality $[a]_- b \leq 
[a]_- [b]_-$, for $a,b \in \IR$.

Next for an affine mapping we have 
\begin{equation}
DB(\bfv-\bfw) = B(\bfv) - B(\bfw)
\end{equation}
and thus
\begin{align}
&([B(\bfv)]_- - [B(\bfw)]_-, DB(\bfv - \bfw) )_{\IR^m} 
\\
&\qquad=
([B(\bfv)]_- - [B(\bfw)]_-, B(\bfv) - B(\bfw) )_{\IR^m}
%\\
%&\qquad=
%([B(\bfv)]_+,B(\bfv))_{\IR^m}
%- ([B(\bfv)]_+,B(\bfw))_{\IR^m} 
%\\
%&\qquad \qquad
%- ([B(\bfw)]_+,B(\bfv))_{\IR^m}
%+ ([B(\bfw)]_+,B(\bfw) )_{\IR^m}
%\\
%&\qquad \geq 
%\|[B(\bfv)]_+ - [B(\bfw)]_+\|^2_{\IR^m}
\end{align}
Proceeding componentwise as above, we will directly obtain the estimate.
\end{proof}

\end{itemize}

%\bibstyle{WileyNJD-AMA}
%\bibliography{AUG}

\bigskip
\paragraph{Acknowledgement.} This research was supported in part by the Swedish Research
Council Grants Nos.\ 2021-04925, 2022-03908, and the Swedish
Research Programme Essence. EB acknowledges funding from EP/T033126/1 and EP/V050400/1.

\bibliographystyle{abbrv}
\footnotesize{
%\bibliography{refs}
}

%\vfill
\bigskip
\bigskip
\noindent
\footnotesize {\bf Authors' addresses:}

\smallskip
\noindent
Erik Burman,  \quad \hfill Mathematics, University College London, UK\\
{\tt e.burman@ucl.ac.uk}

\smallskip
\noindent
Peter Hansbo,  \quad \hfill Mechanical Engineering, J\"onk\"oping University, Sweden\\
{\tt peter.hansbo@ju.se}

\smallskip
\noindent
Mats G. Larson,  \quad \hfill Mathematics and Mathematical Statistics, Ume\aa University, Sweden\\
{\tt mats.larson@umu.se}

\end{document}